\theoremstyle{plain}
\newtheorem{theorem}{Theorem}[section]
\newtheorem{definition}[theorem]{Definition}
\newtheorem{lemma}[theorem]{Lemma}
\newtheorem{proposition}[theorem]{Proposition}
\theoremstyle{remark}
\newtheorem{remark}[theorem]{Remark}
\def\C{{\mathbf C}}
\def\R{{\mathbf R}}
\def\N{{\mathbf N}}
\newcommand{\SR}{\mathcal{S}(\mathbf{R})}
\def\I{\mathcal{I}}
\def\J{\mathcal{J}}
\def\F{\mathcal{F}}
\def\P{\mathbb{P}}
\def\v{\mathcal{V}}
\def\P{\mathcal{P}}
\def\({\left(}
\def\){\right)}
\def\<{\left\langle}
\def\>{\right\rangle}
\numberwithin{equation}{section}
\begin{document}

\title[FEM for the Fowler equation]{Finite Element Method for a space-fractional anti-diffusive equation}

\author[A. Bouharguane]{Afaf Bouharguane}
\address{Institut de Mathématiques de Bordeaux, Talence F-33040, 
MEMPHIS, INRIA Bordeaux Sud-Ouest  \\}
\email{afaf.bouharguane@math.u-bordeaux.fr}
\keywords{Fractional anti-diffusive operator, finite element method, Crank-Nicolson scheme, stability, error
analysis.} 

\maketitle

\date{} 

\begin{abstract}
The numerical solution of a nonlinear and space-fractional anti-diffusive equation used to model dune morphodynamics is considered. Spatial discretization is effected using a finite element method whereas the Crank-Nicolson scheme is used for temporal discretization. The fully discrete scheme is analyzed to determine stability condition  and also to obtain error estimates for the approximate solution. Numerical examples are presented to illustrate convergence results. 

\end{abstract}

\section{Introduction} 
\noindent We consider the Fowler equation \cite{Fo01}
\begin{equation}
    \begin{aligned}
        &\partial_t u(t,x) + \partial_x\left(\frac{u^2}{2} \right) (t,x)  - \partial_{xx} u (t,x) + \I[u]  (t,x)  = 0,\quad  x \in \R, t>0, 
    \end{aligned}
\label{fowlereqn}
\end{equation} 
where $\I$
is a nonlocal operator defined as follows: for any Schwartz
function $\varphi \in \SR$ and any $x \in \R$,
\begin{equation}
\I [\varphi] (x) := \int_{0}^{+\infty} |\xi|^{-\frac{1}{3}}
\varphi''(x-\xi) \, d\xi . 
\label{nonlocaltermi}
\end{equation}

The Fowler equation was introduced to model the formation and dynamics of sand structures such as dunes and ripples \cite{Fo01}. This equation is valid for a river flow over an erodible bottom $u(t,x)$ with slow variation. 
Its originality resides in the nonlocal term, wich is anti-dissipative, and can been seen as a fractional Laplacian of order $4/3$. Indeed, it has been proved in \cite{AAI10} that 
\begin{equation*}
\F( \I[\varphi])(\xi ) = -4 \pi^2 \Gamma(\frac{2}{3}) \left( \frac{1}{2} - i \mbox{sgn}(\xi) \frac{\sqrt 3}{2}  \right) |\xi|^{4/3} \F(\varphi)(\xi),
\end{equation*}
where $\Gamma$ is the gamma function and $\F$ denotes the Fourier transform. \\
Therefore, this term has a deregularizing effect on the initial data but the instabilities produced by the nonlocal term are controled by the diffusion operator $-\partial_x^2 $  which ensures the existence and the uniqueness of a smooth solution. We then always assume that there exists a sufficiently regular solution $u(t,x)$. \\  
The use of Fourier transform is a natural way to study this equation but it also can be useful to consider the following formula: \\
for all $r>0$ and all $\varphi \in \mathcal{S}(\R)$,
\begin{eqnarray}
\I[\varphi](x) &=& \I_1[\varphi](x) + \I_2[\varphi](x),  
\label{decompo}
\end{eqnarray}
with 
\begin{equation*}
 \I_1[\varphi](x) = \int_0^r |\xi|^{-1/3} \varphi''(x-\xi) \, d\xi
\end{equation*}
and
\begin{eqnarray*} 
\I_2[\varphi](x) 
&=& -\frac{1}{3} \int_{r}^{\infty} |\xi|^{-4/3} \varphi'(x-\xi) \, d\xi + \varphi'(x-r) r^{-1/3}.
\end{eqnarray*}
Several numerical approaches have been suggested in the literature to overcome the 
 equations with nonlocal operator. Droniou used a  general class of difference methods for fractional conservation laws \cite{JD10}, Zheng and Roop proposed a finite element method to solve a space-fractional advection equations \cite{Zheng10}, \cite{Roop06}. Liu proposed a numerical solution for the fractional fokkerplanck equation \cite{Liu04}. Meerschaert studied  finite difference approximations of fractional advection dispersion flow equation \cite{Meerschaert04}. Fix presented a least squares finite-element approximations of a fractional order differential equation  \cite{Fix04}.  Xu applied the discontinuous Galerkin method to  fractional convection diffusion equations with a fractional Laplacian of order $\lambda \in (1,2)$ \cite{Xu13} and, recently Guan investigated stabitlity and error estimates for  $\theta$ schemes for finite element discretization of the space-time fractional diffusion equations \cite{GG15}. \\
To solve the Fowler equation \eqref{fowlereqn} some numerical experiments have been performed using mainly finite difference method and split-step Fourier method \cite{ABRC14}.\\ 
We propose here to use the standard Galerkin method for the space approximation and a Crank-Nicolson scheme for the time discretization, which is a more simple way  to improve approximations and to model complex geometries.  \\ 
For $T>0, L>0$, we seek a function $u$ defined on $\R\times [0,T]$, 2L-periodic in the second variable and satisfying  \\
\begin{equation}
\left\{
    \begin{aligned}
        &\partial_t u(t,x) + \partial_x\left(\frac{u^2}{2} - \partial_{x} u + \J[u] \right) (t,x)  = 0,\quad  x \in \R, t \in (0,T), \\
        &u(0,x) = u_0(x),\quad  x \in \R,
    \end{aligned}
\right.
\label{fowlereqn2}
\end{equation} 
where $u_0$ is a given 2L-periodic function and 
\begin{equation}
\J [\varphi] (x) := \int_{0}^{+\infty} |\xi|^{-\frac{1}{3}}
\varphi'(x-\xi) \, d\xi .  \\
\label{nonlocalterm}
\end{equation} 
To prove the convergence of the numerical scheme we use the standard material on the finite element method for parabolic problems \cite{Thomee2006}. However, the analysis of the variational solution to the Fowler equation is more complicated than the usual parabolic equations because the fractional differential operator is not local and is anti-diffusive. \\
In this paper we analyze the discretization of \eqref{fowlereqn2} by a Crank-Nicolson method in time combined with the standard Garlerkin-finite element method in space. Our main result consists in prove the following error estimate: 
\begin{equation*}
||u(t^n,\cdot) - U^n|| \leq C (\Delta t^2 + h^k),
\end{equation*}
where $k$ is the optimal spatial rate of convergence in $L^2$, 
 $\Delta t=T/N$ is the time step, $t^n=n \Delta t,  n = 0, \cdots, N$ and $h$ is the spatial discretization. $U^0,\cdots,U^N$ are the approximations of the solutions at different times. \\
We also prove that our numerical scheme is stable if the following condition is satisfied: 
\begin{equation*}
C_1 \frac{\Delta t}{ h^2} + C_2 \frac{\Delta t}{h^{4/3}} \leq 1,
\end{equation*}
where $C_1,C_2$ are two positive constants independent of $\Delta t$ and $h$. \\

It is clear that our analysis can easily be extended to the case where the nonlocal term $\I$ is replaced with a Fourier multiplier homogeneous of degree $\lambda \in ]1,2[$ and not only $\lambda=4/3$ .  It also can replaced with the Riemann-Liouvillle integral. Indeed, for causal functions, our nonlocal term is, up to a multplicative constant, a Riemann-Liouville operator defined as follows: 
\begin{equation*}
\frac{d^{4/3} \varphi }{dx }(x) = \frac{1}{\Gamma(2/3)} \int_{0}^{+\infty} |\xi|^{-1/3} \varphi''(x-\xi) \, d\xi . \\
\end{equation*}


The rest of this paper is construct as follows. In the next section we give the preliminary knowledge regarding the fractional operator and some technical Lemmas.  We also introduce a projection operator and derive some error estimates which will play an important role in the sequel. The error estimate for the Galerkin-finite element method  to solve the problem \eqref{fowlereqn2} is studied in Section \ref{spdisc}. In section \ref{tpdisc}, we derive error estimates and prove   existence and uniqueness  of the fully discrete approximations. We also give a stability result. \\
 We finally perform some numerical experiments to confirm the theoretical results in section \ref{senum}.  \\

\subsection{Notations }
\begin{itemize}
\item We denote by $C(c_1, c_2, . . .)$ a generic positive constant, strictly positive, which depends on parameters $c_1, c_2, \cdots$
\item For $m \in \N$, let $H^m_{per}$ be the periodic Sobolev space of order $m$, consisting of the $2L-$periodic elements of $H^m_{loc}(\R).$ We denote by $||\cdot||_m$ the norm over a period in  $H^m_{per}$, by $||\cdot||$ the norm in $L^2(-L,L)$, and by $(\cdot, \cdot)$ the inner product in $L^2(-L,L)$. 
\item  We denote by $C_n(\varphi)$ the Fourier coefficient of $\varphi$ defined by: for all $n \in \mathbb{Z}$
\begin{equation*}
C_n(\varphi) = \frac{1}{2L} \int_{-L}^L \varphi(x) e^{- i  \frac{n}{L} x} \, dx 
\end{equation*}
\end{itemize}

\section{Preliminaries}

In this section, we give the variational formulation of the problem \eqref{fowlereqn2} and we introduce a projection operator. We derive some estimates wich will be useful in the next sections. \\

We shall discretize \eqref{fowlereqn2} in space by the Galerkin method. To this effect, let $-L=x_0<x_1<\cdots<x_N=L$ be a partition of $[-L,L]$ and $h:=\max_{j}(x_{j+1}-x_j)$. \\ 
 For integer $r\geq 2$, let $\mathcal{S}_h^r$ denote a space  of continuously differentiable, 2L-periodic functions of degree $r-1$ in which approximations to the solution $u(t,\cdot)$ \eqref{fowlereqn2} will be sought for $t \in [0,T]$. \\
 We assume that this family is a finite-dimensional subspaces of $H^1_{per}$ such that, for some integer $r\geq 2$ and small $h$,
\begin{equation}
\label{estimateSh}
\inf_{\chi \in S^r_h} \left\{ ||v-\chi|| + h ||\nabla(v-\chi) || \right\} \leq C h^s ||v||_s, \quad \mbox{for } 1\leq s \leq r, 
\end{equation}
where $v \in H^s_{per}$ (cf. e.g \cite{Ak94} and references therein ). \\

\noindent Note that since the pratical implementation of the scheme requires to make some truncations including the integral operator $\J$, we replace $\int_0^{+\infty}$ with $\int_0^L$ in \eqref{nonlocalterm}.  \\

\noindent A variational form of the problem is: 
\begin{equation}\label{formva}
(u_t,v) + (u u_x,v) + (u_x,v') - (\J[u],v') = 0 \quad \forall v \in H^{1}_{per}, \forall t \in (0,T).  \\
\end{equation}

\begin{proposition}[$L^2$-estimate\label{prop1}] 
Let u the solution of the variational form  \eqref{formva}. Then, 
for all $t \in [0,T]$, 
\begin{equation*}
||u(t,\cdot) ||  \leq  e^{w_0 t} ||u_0||, 
\end{equation*}
where $w_0$ is a positive constant. 

\end{proposition}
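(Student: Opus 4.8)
The natural approach is the standard energy method: take $v = u(t,\cdot)$ in the variational formulation \eqref{formva} and estimate each term, aiming for a differential inequality of the form $\frac{d}{dt}\|u\|^2 \le 2w_0 \|u\|^2$, after which Gr\"onwall's lemma gives the claimed exponential bound. First I would write
\begin{equation*}
(u_t,u) + (uu_x,u) + (u_x,u') - (\J[u],u') = 0,
\end{equation*}
and observe that $(u_t,u) = \tfrac12 \tfrac{d}{dt}\|u\|^2$, that the nonlinear term vanishes by periodicity since $(uu_x,u) = \int_{-L}^L u^2 u_x\,dx = \tfrac13\int_{-L}^L (u^3)_x\,dx = 0$, and that the diffusion term gives the favorable sign $(u_x,u') = \|u_x\|^2 \ge 0$, hence can be dropped to the other side. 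This reduces everything to controlling the anti-diffusive contribution $(\J[u],u')$.

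The heart of the matter is therefore to bound $(\J[u],u')$ by $C\|u\|^2 + \tfrac12\|u_x\|^2$ (or similar), so that the $\|u_x\|^2$ piece is absorbed by the diffusion term and only a zero-order term in $\|u\|^2$ survives. For this I would pass to Fourier coefficients: by the symbol computation recalled in the introduction (the analogue for $\J$ of the stated Fourier multiplier identity for $\I$), $\J$ acts on the $n$-th Fourier mode as multiplication by a constant times $(\tfrac12 - i\,\mathrm{sgn}(n)\tfrac{\sqrt3}{2})\,|n/L|^{1/3}$ up to lower-order boundary corrections coming from the truncation of $\int_0^{+\infty}$ to $\int_0^L$ (which by \eqref{decompo} with $r=L$ produces bounded, $L^\infty$-type remainders controlled by $\|u'\|$ and $\|u\|$). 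Using Parseval, $(\J[u],u')$ becomes a sum over $n$ of terms bounded by $C|n|^{1/3}|n|\,|C_n(u)|^2 = C|n|^{4/3}|C_n(u)|^2$, and then Young's inequality $|n|^{4/3} = |n|^{4/3}\cdot 1 \le \delta |n|^2 + C_\delta$ splits this into $\delta\|u_x\|^2 + C_\delta\|u\|^2$. Choosing $\delta$ small enough that $\delta\|u_x\|^2$ is absorbed by $\|u_x\|^2$ from the diffusion term yields $\tfrac{d}{dt}\|u\|^2 \le 2w_0\|u\|^2$ with $w_0$ depending only on $L$ and the constants in the symbol.

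The main obstacle I anticipate is handling the truncation of the nonlocal operator cleanly: once $\int_0^{+\infty}$ is replaced by $\int_0^L$, $\J[u]$ is no longer an exact Fourier multiplier, so the spectral argument above must be supplemented by an estimate of the boundary/remainder terms. The decomposition \eqref{decompo} (taking $r = L$) is the right tool here — it exhibits $\J[u]$ as a weakly singular integral over $[0,L]$ plus an explicit boundary term $u(x-L)L^{-1/3}$ and a convergent tail; each of these is estimated in $L^2$ by $C(\|u\| + \|u_x\|)$ using Young's convolution inequality and periodicity, and then the same Young's-inequality splitting absorbs the $\|u_x\|$ part. Once these estimates are assembled, Gr\"onwall's inequality applied to $y(t) = \|u(t,\cdot)\|^2$ finishes the proof, giving $\|u(t,\cdot)\| \le e^{w_0 t}\|u_0\|$.
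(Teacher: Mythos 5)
Your proposal is correct and follows essentially the same route as the paper: test with $u$, kill the nonlinear term by periodicity, and show via Fourier coefficients that $(u_x-\J[u],u_x)\ge -w_0\|u\|^2$ because the order-$|n|^{4/3}$ symbol of $\J[\cdot]_x$ is dominated by the order-$|n|^2$ diffusion symbol. The one remark worth making is that your anticipated obstacle is not actually one: on $2L$-periodic functions the truncated $\J$ is still an exact Fourier multiplier, $C_n(\J[u])=C_n(\psi)C_n(u_x)$ with $\psi$ the periodized kernel $x^{-1/3}\chi_{(0,L)}$, and the paper simply bounds $|C_n(\psi)|\le C|n|^{-2/3}$ uniformly via the convergent Fresnel-type integrals and then takes $-w_0=\min_n\bigl[(\tfrac{\pi n}{L})^2-C(\tfrac{\pi n}{L})^{4/3}\bigr]$ in place of your Young's-inequality splitting --- the two absorptions are equivalent.
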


\begin{proof}
Taking $v=u(t,\cdot)$ in \eqref{formva}, we obtain by periodicity 
\begin{equation}
\frac{1}{2}\frac{d}{dt} ||u(t,\cdot)||^2 + (u_x-\J[u],u_x) = 0.
\label{L2estimate}
\end{equation}
Using the Fourier analysis, we have $C_n(u_x)  =  i \pi \frac{n}{L} C_n(u)$ and since $\J[u] = \psi \ast u_x $, with $$\psi(x) = x^{-1/3} \chi_{(0,\infty)}, $$
then $C_n(\J[u]) = C_n(\psi) C_n(u_x).$ But since
\begin{eqnarray*}
C_n(\psi) &=& \frac{1}{2L} \int_0^L x^{-1/3} e^{-i \pi \frac{n}{L}x} dx = \frac{1}{2L^{1/3}} \frac{1}{ \pi^{2/3}}  n^{-2/3} \int_0^{\pi  n } \frac{e^{-iu}}{u^{1/3}} du
\end{eqnarray*}
then, 

\begin{eqnarray*}
(u_x-\J[u],u_x) &=& \sum_{n=-\infty}^{+\infty} [ (\frac{\pi n}{L})^2 - (\frac{\pi n}{L})^{4/3} \frac{1}{2L}\int_{0}^{ \pi  n} \frac{e^{-iu}}{u^{1/3}} du ] |C_n(u)|^2 \\
&\geq&  \sum_{n=-\infty}^{+\infty} [ ( \frac{\pi n}{L} )^2 - |(\frac{\pi n}{L})^{4/3}  \frac{1}{2L}\ \int_{0}^{\pi   n} \frac{e^{-iu}}{u^{1/3}} du| ] |C_n(u)|^2 \\
\end{eqnarray*}
Since
$$\int_0^\infty \frac{\cos(u)}{u^{1/3}} du = \frac{1}{2}\Gamma(\frac{2}{3}), \mbox{ and} \int_0^{\infty} \frac{\sin(u)}{u^{1/3}} du =  \frac{\sqrt{3}}{2}\Gamma(\frac{2}{3})$$ 
it follows that 
\begin{equation*}
 | \frac{1}{2L} \int_0^{ \pi  n } \frac{e^{-iu}}{u^{1/3}} du| \leq C,
\end{equation*}
where $C $ is a positive constant. 
Therefore by Plancherel's formula,
\begin{equation*}
(u_x-\J[u],u_x) \geq  \sum_{n=-\infty}^{+\infty} [ ( \frac{\pi n}{L})^2 - (\frac{\pi n}{L})^{4/3} C )] |C_n(u)|^2 \geq -w_0 ||u(t,\cdot)||^2,
\end{equation*}
where $-w_0 = \min_{n} [(\frac{\pi n}{L})^2 - (\frac{\pi n }{L})^{4/3} C] \leq 0$. 
Finally, using \eqref{L2estimate}, we obtain
\begin{equation*}
||u(t,\cdot) || \leq  e^{w_0 t} ||u_0||. 
\end{equation*}
The proof of this proposition is now complete. \\
\end{proof}

\begin{remark} \label{rem1} Following the same lines as the proof of the Proposition \ref{prop1}, we have that:  \\
$\forall \nu > 0, \exists \alpha >0 $ such that 
\begin{equation*}
(\nu u_x-\J[u],u_x) \geq  -\alpha ||u||^2. 
\end{equation*}

\end{remark}

\begin{lemma} \label{estimateJ1}
Let $\varphi \in H^{2/3}_{per}$. Then 
\begin{equation} 
||\J[\varphi|| \leq C ||\varphi||_{2/3}.
\end{equation} 
\end{lemma}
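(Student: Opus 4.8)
The plan is to argue entirely on the Fourier side, recycling the computations already performed in the proof of Proposition \ref{prop1}. First I would recall that, with $\psi(x) = x^{-1/3}\chi_{(0,\infty)}$ (truncated to $(0,L)$ in the implemented scheme), one has $\J[\varphi] = \psi \ast \varphi'$, so that for every $n \in \Z$
\[
C_n(\J[\varphi]) = C_n(\psi)\,C_n(\varphi') = i\pi\tfrac{n}{L}\,C_n(\psi)\,C_n(\varphi),
\]
and in particular $C_0(\J[\varphi]) = 0$ since the zeroth Fourier coefficient of a derivative of a $2L$-periodic function vanishes.

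Next I would estimate $|C_n(\psi)|$. From the identity $C_n(\psi) = \frac{1}{2L^{1/3}\pi^{2/3}}\,n^{-2/3}\int_0^{\pi n} u^{-1/3} e^{-iu}\,du$ and the uniform bound $\bigl|\int_0^{\pi n} u^{-1/3} e^{-iu}\,du\bigr| \le C$, which follows from the convergence of the oscillatory integrals $\int_0^\infty u^{-1/3}\cos u\,du = \tfrac12\Gamma(\tfrac23)$ and $\int_0^\infty u^{-1/3}\sin u\,du = \tfrac{\sqrt3}{2}\Gamma(\tfrac23)$ used already in Proposition \ref{prop1}, one obtains $|C_n(\psi)| \le C\,|n|^{-2/3}$ for $n \neq 0$ (and $C_0(\psi) = \frac{1}{2L}\int_0^L x^{-1/3}\,dx$ is finite). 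Combining with the previous display gives $|C_n(\J[\varphi])| \le C\,|n|\cdot|n|^{-2/3}\cdot|C_n(\varphi)| = C\,|n|^{1/3}\,|C_n(\varphi)|$ for $n \neq 0$.

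Finally I would apply Plancherel's formula together with the Fourier characterization of the periodic Sobolev norm, $\|\varphi\|_s^2 \simeq \sum_{n \in \Z}(1+|n|^2)^s|C_n(\varphi)|^2$, to conclude
\[
\|\J[\varphi]\|^2 = 2L\sum_{n \in \Z}|C_n(\J[\varphi])|^2 \le C\sum_{n \in \Z}|n|^{2/3}|C_n(\varphi)|^2 \le C\sum_{n \in \Z}(1+|n|^2)^{1/3}|C_n(\varphi)|^2 = C\,\|\varphi\|_{2/3}^2,
\]
using $|n|^{2/3} \le (1+|n|^2)^{1/3}$, and then take square roots. The routine part is the Fourier bookkeeping; the only genuine point requiring care is the uniform-in-$n$ control of the truncated oscillatory integral $\int_0^{\pi n} u^{-1/3} e^{-iu}\,du$, but this has effectively been settled in Proposition \ref{prop1}, so here it suffices to quote it. One should also remark that replacing $\int_0^{+\infty}$ by $\int_0^L$ in the definition of $\psi$ only affects the constant, since $x^{-1/3}$ is integrable near $0$.
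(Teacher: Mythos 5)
Your argument is essentially the paper's own proof: both work on the Fourier side, use the uniform bound on the oscillatory integral $\int_0^{\pi n}u^{-1/3}e^{-iu}\,du$ from Proposition \ref{prop1} to get $|C_n(\psi)|\le C|n|^{-2/3}$, and conclude by Plancherel and the Fourier characterization of $\|\cdot\|_{2/3}$. Your bookkeeping is in fact slightly sharper (you track the multiplier as $|n|^{1/3}$, which really gives control by $\|\varphi\|_{1/3}$, so your final ``$=C\|\varphi\|_{2/3}^2$'' should be ``$\le$''), but this only strengthens the stated estimate.
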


\begin{proof}
From Fourier analysis and using computations from the Proposition \ref{prop1}, we have 
\begin{eqnarray*}
||\J[\varphi||  &=& \sum_{n} |C_n(\J)|^2 = \sum_n |C_n(\psi) C_n(\varphi)|^2 \\
&\leq& C \sum_n n^{4/3} | C_n(\varphi)|^2, \\
&=& C \sum_n \left( \frac{n^2}{1+n^2} \right)^{2/3} (1+n^2)^{2/3} | C_n(\varphi)|^2, \\
&\leq& C \sum_n (1+n^2)^{2/3} | C_n(\varphi)|^2, \\
&=& C ||\varphi||_{2/3}. 
\end{eqnarray*}
\end{proof}

\begin{lemma}[Bilinear form\label{biliforme}] Let  $u,v \in H^1_{per} $. Then, it exists  $\lambda >0$ such that the bilinear form  
$$a(u,v) = (u',v') - (\J[u],v') + \lambda (u,v)$$ 
is continuous and coercive. 
\end{lemma}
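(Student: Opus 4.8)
The plan is to verify the two hypotheses of the Lax–Milgram framework separately: continuity of $a(\cdot,\cdot)$ on $H^1_{per}\times H^1_{per}$, and coercivity on $H^1_{per}$, with the parameter $\lambda$ chosen at the end to absorb the anti-diffusive contribution. First I would establish continuity. The terms $(u',v')$ and $\lambda(u,v)$ are obviously bounded by $\|u\|_1\|v\|_1$. For the nonlocal term I would write $|(\J[u],v')| \le \|\J[u]\|\,\|v'\|$ and invoke Lemma~\ref{estimateJ1} to get $\|\J[u]\| \le C\|u\|_{2/3} \le C\|u\|_1$, since $H^1_{per} \hookrightarrow H^{2/3}_{per}$ with continuous embedding. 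Hence $|a(u,v)| \le C\|u\|_1\|v\|_1$, which is continuity.

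The coercivity is the substantive part. Taking $v=u$, I would compute $a(u,u) = \|u'\|^2 - (\J[u],u') + \lambda\|u\|^2$. The key point is to control $(\J[u],u')$ from below; this is exactly the computation already carried out in Proposition~\ref{prop1} (and restated in Remark~\ref{rem1}). Concretely, Remark~\ref{rem1} with $\nu=1$ gives an $\alpha>0$ such that $(u'-\J[u],u') \ge -\alpha\|u\|^2$, i.e. $\|u'\|^2 - (\J[u],u') \ge -\alpha\|u\|^2$. Therefore
\begin{equation*}
a(u,u) = \|u'\|^2 - (\J[u],u') + \lambda\|u\|^2 \ge -\alpha\|u\|^2 + \lambda\|u\|^2 = (\lambda-\alpha)\|u\|^2.
\end{equation*}
This lower bound alone only controls the $L^2$ part of the $H^1$ norm, so to obtain genuine $H^1$-coercivity I would instead keep a fraction of the diffusion term: redo the Fourier estimate of Proposition~\ref{prop1} applied to $(\tfrac12 u' - \J[u], u')$ rather than $(u'-\J[u],u')$, which by Remark~\ref{rem1} (with $\nu=\tfrac12$) yields some $\alpha'>0$ with $(\tfrac12 u' - \J[u],u') \ge -\alpha'\|u\|^2$. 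Then
\begin{equation*}
a(u,u) = \tfrac12\|u'\|^2 + \big(\tfrac12\|u'\|^2 - (\J[u],u')\big) + \lambda\|u\|^2 \ge \tfrac12\|u'\|^2 + (\lambda-\alpha')\|u\|^2,
\end{equation*}
and choosing $\lambda = \alpha' + \tfrac12$ gives $a(u,u) \ge \tfrac12\|u\|_1^2$, the desired coercivity.

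The main obstacle is really the lower bound on the nonlocal term $(\J[u],u')$: one must see that, although $\J$ is anti-diffusive (a fractional derivative of order $4/3$, which in the $(\cdot,u')$ pairing behaves like order $4/3 < 2$), it is dominated in Fourier space by the genuine second-order term $\|u'\|^2 = \sum_n (\pi n/L)^2 |C_n(u)|^2$ up to a bounded additive constant times $\|u\|^2$. This is precisely the Young-type inequality $(\pi n/L)^{4/3} \le \eps (\pi n/L)^2 + C_\eps$ used implicitly in Proposition~\ref{prop1}; making $\eps$ small enough leaves a definite fraction of the diffusion intact. Once that Fourier estimate is in hand, as it already is via Remark~\ref{rem1}, everything else is the routine application of Cauchy–Schwarz and the Sobolev embedding sketched above.
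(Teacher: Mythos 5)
Your proposal is correct and follows essentially the same route as the paper: continuity via Lemma~\ref{estimateJ1} together with Cauchy--Schwarz and the embedding $H^1_{per}\hookrightarrow H^{2/3}_{per}$, and coercivity by writing $a(v,v)=\tfrac12\|v'\|^2+(\tfrac12 v'-\J[v],v')+\lambda\|v\|^2$ and invoking Remark~\ref{rem1} with $\nu=\tfrac12$ before choosing $\lambda$ large enough. Your explicit remark that the $\nu=1$ version only controls the $L^2$ part, motivating the split, is a helpful clarification of the step the paper performs without comment.
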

\begin{proof}
Using Lemma \ref{estimateJ1}, we can easily see that $a$ is continuous. Let us now check the coercivity. \\
Fom Remark \ref{rem1},  it exists $\alpha_0>0$ such that for all  $v \in H^1_{per} $
\begin{eqnarray*}
a(v,v) &=& \frac{1}{2}||v_x||^2  + (\frac{1}{2}v_x- \J[v],v_x) + \lambda ||v||^2, \\
&\geq& \frac{1}{2}||v_x||^2 + (\lambda-\alpha_0) ||v||^2, 
\end{eqnarray*} 
Therefore, for 
\begin{equation}
\lambda > \alpha_0,
\label{condlambda}
\end{equation} 
$a$ is coervice. \\
\end{proof}

\begin{lemma}[Projection] \label{lemproj}
We define the projection operator $\mathcal{P}: H^1_{per} \rightarrow \mathcal{S}_h^r$ by 
\begin{equation}
(v'-(\P v)',\chi') - (\J[v]-\J[\P v],\chi') + \lambda (v-\P v,\chi ) = 0, \forall \chi \in \mathcal{S}_h^r,
\label{operproj}
\end{equation}
where $\lambda $ satisfies the condition \eqref{condlambda}.  Then for all  $ 1 \leq s \leq r$ and for all $v \in H^s_{per}$, we have 
\begin{itemize}
\item[1.] $||(v-\P v)'|| \leq C h^{s-1} ||v||_s$
\item[2.] $||v-\P v|| \leq C h^{s} ||v||_s$
\end{itemize}
\end{lemma}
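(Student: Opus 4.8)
The plan is to prove the projection estimates by the standard Aubin--Nitsche (duality) argument adapted to the non-local, non-symmetric bilinear form $a(\cdot,\cdot)$ of Lemma~\ref{biliforme}. First I would record that the defining identity \eqref{operproj} says precisely $a(v-\P v,\chi)=0$ for all $\chi\in\mathcal S_h^r$, i.e.\ $\P v$ is the Ritz projection of $v$ with respect to $a$. Since $a$ is continuous (with constant $M$) and coercive (with constant $\gamma$) on $H^1_{per}$ by Lemma~\ref{biliforme}, C\'ea's lemma gives, for any $\chi\in\mathcal S_h^r$,
\begin{equation*}
\gamma\,\|v-\P v\|_1^2 \le a(v-\P v,v-\P v) = a(v-\P v,v-\chi) \le M\,\|v-\P v\|_1\,\|v-\chi\|_1,
\end{equation*}
hence $\|v-\P v\|_1 \le (M/\gamma)\inf_{\chi\in\mathcal S_h^r}\|v-\chi\|_1$. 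Combining this with the approximation hypothesis \eqref{estimateSh} applied to $v\in H^s_{per}$ (choosing $\chi$ the near-best approximant) yields $\|(v-\P v)'\|\le \|v-\P v\|_1 \le C h^{s-1}\|v\|_s$, which is item~1. Here one uses \eqref{estimateSh} in the form $\|v-\chi\|+h\|\nabla(v-\chi)\|\le Ch^s\|v\|_s$, so in particular $\|\nabla(v-\chi)\|\le Ch^{s-1}\|v\|_s$ and $\|v-\chi\|\le Ch^s\|v\|_s$; the $H^1$-norm of the error is then $O(h^{s-1})$.

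For item~2, the $L^2$ estimate, the $H^1$ bound alone only gives $O(h^{s-1})$, so I would gain the extra power of $h$ by duality. Let $e=v-\P v$ and consider the adjoint problem: find $w\in H^1_{per}$ with $a(\phi,w)=(e,\phi)$ for all $\phi\in H^1_{per}$. Because $a$ is continuous and coercive, Lax--Milgram (applied to the transposed form $a^{*}(w,\phi):=a(\phi,w)$, which inherits continuity and coercivity) gives a unique such $w$, and one needs the elliptic regularity statement $\|w\|_2\le C\|e\|$. This regularity is where the non-local term requires a word: the symbol of $\J$ is homogeneous of degree $4/3<2$, so $\J$ is a lower-order perturbation of the second-order operator $-\partial_{xx}+\lambda$, and the adjoint equation $-w''-\J^{*}[w]+\lambda w=e$ (with $\J^{*}$ the convolution against the reflected kernel) is $H^2$-regular by the same Fourier-multiplier bookkeeping used in Proposition~\ref{prop1} and Lemma~\ref{estimateJ1} --- the $|\xi|^{4/3}$ term is absorbed into $|\xi|^2$ for large $|\xi|$. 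Taking $\phi=e$ in the adjoint problem and using $a(e,\chi)=0$ for all $\chi\in\mathcal S_h^r$,
\begin{equation*}
\|e\|^2 = a(e,w) = a(e,w-\chi) \le M\,\|e\|_1\,\|w-\chi\|_1 \le M\,\|e\|_1\cdot Ch\|w\|_2 \le C h\,\|e\|_1\,\|e\|,
\end{equation*}
so $\|e\|\le Ch\,\|e\|_1\le Ch\cdot Ch^{s-1}\|v\|_s = Ch^s\|v\|_s$, which is item~2.

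The main obstacle I anticipate is not the abstract C\'ea/duality machinery but justifying the elliptic $H^2$-regularity for the adjoint problem in the presence of the non-local, non-self-adjoint operator $\J$: one must check that $\J^{*}$ is well-defined on the periodic space, that its symbol is still controlled by $(1+n^2)^{2/3}$ (so Lemma~\ref{estimateJ1}-type bounds apply to the adjoint), and that coercivity of $a^{*}$ holds with the \emph{same} threshold \eqref{condlambda} on $\lambda$ --- which follows because the real part of the symbol of $\J$ is even in $\xi$, so the computation in Remark~\ref{rem1} is insensitive to reflecting the kernel. A secondary technical point is that $\mathcal S_h^r\subset H^1_{per}$ consists of $2L$-periodic $C^1$ splines, so the approximation property \eqref{estimateSh} is stated exactly in the periodic setting and can be invoked directly on $w$ with $s=2$; no boundary terms arise thanks to periodicity. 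Once these points are in place, the two estimates follow by the displayed inequalities with $C=C(M,\gamma,L,r)$.
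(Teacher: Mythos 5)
Your proof is correct and follows essentially the same route as the paper: C\'ea's lemma with the coercive, continuous form $a$ for the $H^1$ estimate, then an Aubin--Nitsche duality argument with an $H^2$-regular adjoint problem for the $L^2$ estimate (you are in fact more careful than the paper in placing the error in the slot where the Galerkin orthogonality $a(v-\P v,\chi)=0$ actually applies). The only substantive difference is cosmetic: the paper justifies the regularity bound $\|\psi\|_2\le C\|v-\P v\|$ via the kernel splitting \eqref{decompo}, absorbing the near-field part $\tfrac{3}{2}r^{2/3}\|\psi''\|$ for small $r$, whereas you absorb the symbol $|\xi|^{4/3}$ into $|\xi|^2$ on the Fourier side --- both are valid.
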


\begin{proof}
\noindent 1. Arguing as the proof of the Cea's Lemma and from \eqref{estimateSh}, we get for all $v \in H^s_{per}$
\begin{equation}
||v-\P v||_1 \leq C h^{s-1} ||v||_s. 
\label{cea}
\end{equation}
Indeed, using the bilinear form $a$ defined in  Lemma \ref{biliforme} we have
\begin{eqnarray*}
a(v-\P v,v-\P v) &=& a(v-\P v,  v- \chi + \chi - \P v  ) = a(v-\P v, v - \chi) \quad    \forall \chi \in \mathcal{S}_h^r,
\end{eqnarray*}
and from the coecivity and continuity properties, we get
\begin{eqnarray*}
C ||v-\P v||^2_1 &\leq&  || (v-\P v)' || \,  ||(v-\chi)'||    + ||\J[v-\P v]|| \, ||(v-\chi)'|| + \lambda ||v-Pv||\, ||v-\chi|| \\
&\leq& C || v-\P v||_1  \left(  ||(v-\chi)'||  + ||(v-\chi)'||  + \lambda ||v-\chi||  \right)
\end{eqnarray*} 
Therefore, $ ||v-\P v||_1 \leq \inf_{\chi \in S_h} ||(v-\chi)'||$, and using 
finally  the property of $\mathcal{S}_h^r$ \eqref{estimateSh}, we obtain 
$$||v-\P v||_1 \leq \C h^{s-1} ||v||_s, \quad \forall v \in H^s_{per} .$$\\

\noindent 2. To estimate $||v-\P v||$ we consider the auxiliary problem
$$a(\psi, \varphi) = (v-\P v,\varphi).$$
Then, for $\chi \in \mathcal{S}^r_h,$ we have from continuity of $a$, assumption \eqref{estimateSh} and  estimate \eqref{cea}
\begin{eqnarray*}
||v-\P v||^2 &=& a(\psi-\chi,v-Pv) \leq C  \inf_{\chi \in S^r_h} ||\psi-\chi||_1 ||v-\P v||_1 \\
&=& \tilde{C} h ||\psi||_2 ||v-\P v||_1 \\
&\leq& C h^s ||\psi||_2 ||v||_s. 
\end{eqnarray*}
Now using the decomposition \eqref{decompo} of $\I$, we get 
\begin{equation*}
||\I[\psi]] \leq \frac{3}{2}r^{2/3} ||\psi''|| + C(r) ||\psi'||.  
\end{equation*}
Taking  $r$ sufficiently small and using the coercivity, we obtain the regularity estimate \\$||\psi||_2 \leq C ||v-\mathcal{P}|| $
which yields 
\begin{equation*}
||v-\P v|| \leq C h^s ||v||_s, \forall v \in H^{s}_{per}. 
\end{equation*}
This completes the proof of this Lemma. \\

\end{proof}

\section{Discretization with respect to the space variable \label{spdisc}}
Motivated by \eqref{formva} we define the semidiscrete approximation $u_h(t,\cdot) \in \mathcal{S}^r_{h}$, $t\in (0,T)$, to $u$ by 
\begin{equation}
\left\{
    \begin{aligned}
        &  (u_{ht},v_h) + (u_h u_{hx},v'_{h}) + (u_{hx},v'_{h}) - (\J[u_h],v'_{h}) = 0,\quad  \forall v_h \in \mathcal{S}^r_h, t \in (0,T) \\
	& u_h(0,x) = u_{h}^0(x), 
    \end{aligned}
\right.
\label{semipb}
\end{equation} 
where $u_{h}^0 \in \mathcal{S}^r_h$ is an approximation of $u_0$ and $u^0_h$ is such that 
\begin{equation}
||u_h^0-u_0|| \leq C h^{r-1}. 
\label{CI}
\end{equation}
The semidiscrete approximation has the following property 
\begin{equation}
||u_h(t,\cdot)|| \leq e^{w_0t} ||u_h^0||, \quad t \in (0,T).
\label{l2estimatediscret}
\end{equation}
This inequality can be proved in the same way as Proposition \ref{prop1}. Now since $\mathcal{S}^r_h$ is finite-dimensional  we have 
\begin{equation}
\max_{t\in(0,T)} ||u_h(t,\cdot)||_\infty \leq C(h). 
\end{equation}
Then, regarding the equation \eqref{semipb} as a system of ODE, we deduce existence and uniqueness of the semidiscrete approximation $u_h$.

\begin{theorem} Let the solution $u$ of   \eqref{fowlereqn2} sufficiently smooth, and let \eqref{CI} hold. Then
\begin{equation}
\max_{t\in[0,T]} ||u(t,\cdot)-u_h(t,\cdot)|| \leq C h^{r-1},
\end{equation}  
where $C=C(u)$ is a positive constant. 
\end{theorem}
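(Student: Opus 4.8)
The plan is to adapt the classical energy argument for semilinear parabolic finite element problems (cf.~\cite{Thomee2006}), splitting the error by means of the projection $\P$ of Lemma~\ref{lemproj}. Write
\[
u(t,\cdot)-u_h(t,\cdot)=\rho(t)+\theta(t),\qquad \rho=u-\P u,\quad \theta=\P u-u_h\in\mathcal S^r_h .
\]
By Lemma~\ref{lemproj} one has $\|\rho(t)\|\le Ch^{r}\|u(t,\cdot)\|_r$ and $\|\rho_x(t)\|\le Ch^{r-1}\|u(t,\cdot)\|_r$, and since $\P$ is time-independent, $\rho_t=(I-\P)u_t$, so $\|\rho_t(t)\|\le Ch^{r-1}\|u_t(t,\cdot)\|_r$. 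Hence it suffices to prove $\max_{[0,T]}\|\theta\|\le Ch^{r-1}$; the loss of one power of $h$ with respect to $\|\rho\|$ will come only from $\rho_x$ and from the initial datum \eqref{CI}.

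First I would derive the equation for $\theta$. Since \eqref{formva} holds for every $v\in H^1_{per}$, in particular for $v_h\in\mathcal S^r_h$, subtracting the semidiscrete equation \eqref{semipb} and inserting $u-u_h=\rho+\theta$ (using linearity of $\J$) gives, for all $v_h\in\mathcal S^r_h$,
\[
(\theta_t,v_h)+(\theta_x,v_h')-(\J[\theta],v_h')=-(\rho_t,v_h)-\bigl[(\rho_x,v_h')-(\J[\rho],v_h')\bigr]+(uu_x-u_hu_{hx},v_h).
\]
By the definition \eqref{operproj} of $\P$ with $\chi=v_h$, the bracketed term equals $-\lambda(\rho,v_h)$. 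Taking $v_h=\theta$ and using $(\theta_t,\theta)=\tfrac12\frac{d}{dt}\|\theta\|^2$ yields the energy identity
\[
\tfrac12\frac{d}{dt}\|\theta\|^2+(\theta_x-\J[\theta],\theta_x)=-(\rho_t,\theta)+\lambda(\rho,\theta)+(uu_x-u_hu_{hx},\theta).
\]

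Next I estimate the right-hand side while exploiting the anti-diffusive term on the left. From Remark~\ref{rem1} with $\nu=\tfrac12$ there is $\alpha_0>0$ with $(\tfrac12\theta_x-\J[\theta],\theta_x)\ge-\alpha_0\|\theta\|^2$, hence $(\theta_x-\J[\theta],\theta_x)\ge\tfrac12\|\theta_x\|^2-\alpha_0\|\theta\|^2$. The linear terms are handled by Cauchy--Schwarz, Young's inequality and the projection estimates above: $|(\rho_t,\theta)|+\lambda|(\rho,\theta)|\le C(\|\rho_t\|^2+\|\rho\|^2)+C\|\theta\|^2\le Ch^{2(r-1)}+C\|\theta\|^2$. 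For the convective term I would use the conservation form $uu_x-u_hu_{hx}=\tfrac12\partial_x(u^2-u_h^2)$ and integrate by parts (boundary terms vanish by periodicity), so that
\[
|(uu_x-u_hu_{hx},\theta)|=\tfrac12\bigl|\bigl((u+u_h)(\rho+\theta),\theta_x\bigr)\bigr|\le\tfrac12\|u+u_h\|_{L^\infty}\,\|\rho+\theta\|\,\|\theta_x\| ;
\]
applying Young's inequality, absorbing $\tfrac14\|\theta_x\|^2$ into the left-hand side and using $\|\rho\|\le Ch^{r}$ bounds this by $\tfrac14\|\theta_x\|^2+C\|u+u_h\|_{L^\infty}^2(h^{2r}+\|\theta\|^2)$. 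Collecting everything leaves $\frac{d}{dt}\|\theta\|^2\le C\|\theta\|^2+Ch^{2(r-1)}$ on any interval where $\|u_h\|_{L^\infty}$ is under control, and Gronwall's lemma together with $\|\theta(0)\|\le\|\P u_0-u_0\|+\|u_0-u_h^0\|\le Ch^{r-1}$ (Lemma~\ref{lemproj} and \eqref{CI}) gives $\|\theta(t)\|\le Ch^{r-1}$ on $[0,T]$, whence $\|u-u_h\|\le\|\rho\|+\|\theta\|\le Ch^{r-1}$.

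The only genuine obstacle is the factor $\|u+u_h\|_{L^\infty}$: while $\|u\|_{L^\infty}\le C(u)$ since $u$ is smooth, the a priori bound $\|u_h\|_{L^\infty}\le C(h)$ degenerates as $h\to0$, so Gronwall cannot be applied directly. I would remove it by a continuation argument on $[0,T]$: let $t^\star$ be the supremum of the times up to which $\|\theta(t)\|\le Kh^{r-1}$ for a suitably large $K$; on $[0,t^\star]$ the inverse inequality on $\mathcal S^r_h$ (valid for $r\ge2$ on a quasi-uniform family, which we tacitly assume) gives $\|\theta\|_{L^\infty}\le Ch^{-1/2}\|\theta\|\le CKh^{r-3/2}\to0$, and likewise $\|\rho\|_{L^\infty}\to0$, so $\|u_h\|_{L^\infty}$ is bounded by a constant depending only on $u$; the Gronwall estimate then holds with an $h$-independent constant, and choosing $K$ larger than that constant forces $\|\theta(t)\|<Kh^{r-1}$ throughout $[0,t^\star]$ for $h$ small, so that $t^\star=T$ by maximality. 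Everything else — the projection bounds, the coercivity-type estimate, and the interpolation property \eqref{estimateSh} — is routine given Lemma~\ref{estimateJ1}, Lemma~\ref{lemproj} and Remark~\ref{rem1}.
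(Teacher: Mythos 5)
Your proposal is correct and follows the same skeleton as the paper's proof: the splitting $u-u_h=\rho+\theta$ through the projection $\P$ of Lemma \ref{lemproj}, the cancellation $(\rho_x,\chi')-(\J[\rho],\chi')=-\lambda(\rho,\chi)$ coming from \eqref{operproj}, the energy identity for $\theta$, the coercivity-type bound from Remark \ref{rem1}, and Gronwall. The one place where you genuinely diverge is the nonlinear term, and your variant is arguably tighter. The paper writes $uu_x-u_h u_{hx}=u(u-u_h)_x+u_{hx}(u-u_h)$ and bounds it by $C\{\|\rho\|+\|\rho_x\|+\|\v\|+\|\v_x\|\}\|\v\|$, which tacitly requires an $h$-independent bound on $\|u_{hx}\|_\infty$ that is never established (the only a priori bound stated is $\|u_h\|_\infty\le C(h)$, which degenerates as $h\to 0$). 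You instead put the convection in conservation form, integrate by parts to shift the derivative onto $\theta$, so that only $\|u+u_h\|_\infty$ enters, and then supply the missing ingredient via a continuation (bootstrap) argument combined with the inverse inequality $\|\theta\|_\infty\le Ch^{-1/2}\|\theta\|$ to show $\|u_h\|_\infty$ stays bounded by a constant depending only on $u$ for $h$ small. This buys a self-contained, rigorous closure of the Gronwall loop at the modest price of assuming quasi-uniformity of the mesh (needed for the inverse inequality, and in any case already used elsewhere in the paper, e.g.\ in \eqref{relationx}); you also correctly track where the rate $h^{r-1}$ rather than $h^{r}$ comes from, namely $\|\rho_x\|$, $\|\rho_t\|$ and the initial condition \eqref{CI}, which the paper handles identically.
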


\begin{proof}
Let 
$$u-u_h=u-\P u + \P u - u_h = \rho + \mathcal{V},$$
where $\mathcal{P}$ is the operator projection defined in \eqref{operproj}. 
By Lemma \ref{lemproj}, we have 
\begin{equation*}
\max_{t \in [0,T]} ||\rho(t,\cdot)|| \leq C h^{r} 
\end{equation*}
Thus, it remains to estimate $||\v(t,\cdot)||$. 
\begin{eqnarray*}
(\v_t,\chi) + a(\v,\chi) &=& (\P u_t - (u_h)_t,\chi) + a(\P u,\chi) - a(u_h,\chi) \\
&=& (\P u_t,\chi) + a(\P u,\chi) - ((u_h)_t, \chi) - a(u_h,\chi) \\
\end{eqnarray*}
but since, $\forall \chi \in \mathcal{S}^r_h$
$$a(\P u,\chi) = a(u,\chi ).  $$
then
\begin{eqnarray*}
(\v_t,\chi) + a(\v, \chi) &=& (\P u_t,\chi) + a(u,\chi) + (u_h (u_h)_x,\chi) - \lambda (u_h,\chi) \\
&=& -(\rho_t,\chi) - (uu_x-u_h (u_h)_x,\chi) + \lambda (u-u_h,\chi) \\
&=&  -(\rho_t,\chi)  + \lambda (\rho,\chi) + \lambda (\v,\chi) - (u u_x - u_h (u_h)_x, \chi),
\end{eqnarray*}
i.e. 
\begin{eqnarray*}
(\v_t,\chi) + (\v_x,\chi') - (\J[\v],\chi') &=& -(\rho_t,\chi) + \lambda (\rho,\chi) - (u u_x- u_h (u_h)_x,\chi). 
\end{eqnarray*}
Taking $\chi = \v$, we obtain 
\begin{eqnarray*}
\frac{1}{2} \frac{d}{dt}||\v(t,\cdot)||^2 + ||\v_x(t,\cdot)||^2 - (\J[\v],\v_x) &=& -(\rho_t,\v) + \lambda (\rho,\v) -  (u u_x- u_h (u_h)_x,\v) \\
&=& -(\rho_t,\v) + \lambda (\rho,\v) -   (u(u-u_h)_x,\v) - (u_{hx}(u-u_h),\v )   
\end{eqnarray*}
Therefore, we have
\begin{eqnarray*}
\frac{1}{2} \frac{d}{dt} ||\v(t,\cdot)||^2 + ( \frac{1}{2} \v_x - \J [\v], \v_x )  + \frac{1}{2}||\v_x(t,\cdot)||^2 &\leq&
||\rho_t|| \, ||\v|| + \lambda ||\rho|| \, ||\v||  \\ 
&\,& + C \left\{ ||\rho|| + ||\rho_x|| + ||\v|| + ||\v_x||  \right\}  ||\v||, \\
&\leq &  \frac{1}{2} ||\v_x||^2 + \tilde{C}  \left( ||\rho||^2 + ||\rho_x||^2 + ||\rho_t||^2 +  ||\v||^2    \right). 
\end{eqnarray*}
Since  $||\rho|| \leq C h^{r}$,  $||\rho_t|| \leq C h^{r}$ and  $||\rho_x|| \leq C h^{r-1}$ (see Lemma \ref{lemproj})  we have 
\begin{eqnarray*}
\frac{1}{2} \frac{d}{dt} ||\v(t,\cdot)||^2 - w_0 ||\v(t,\cdot)||^2  \leq \tilde{C} h^{2(r-1)} + C' ||\v(t,\cdot)||^2 . 
\end{eqnarray*}
Therefore, we obtain 
\begin{eqnarray*}
\frac{1}{2} \frac{d}{dt} ||\v(t,\cdot)||^2  \leq \C h^{2(r-1)} + C ||\v(t,\cdot)||^2, 
\end{eqnarray*}
and Gronwall's lemma yields 
\begin{equation*}
\max_{t \in [0,T]} ||\v(t,\cdot)|| \leq c h^{r-1}, 
\end{equation*}
which concludes the proof of this theorem. 

\end{proof}

\section{Crank-Nicolson discretization \label{tpdisc}}

We investigate the following  second-order in time fully discrete finite element method for \eqref{fowlereqn2}. \\

Let $N \in \N, \Delta t:=\frac{T}{N}$ and $t^n := n\Delta t, n=0,\cdots, N.$ \\
For $u(t,\cdot) \in L^2(-L,L)$ and $  t \in [0, T],$ let
$$U^n := u(t^n,\cdot), \quad \partial U^n = \frac{U^{n+1}-U^{n}}{\Delta t}, \quad \mbox{ and } \quad U^{n+1/2} :=\frac{U^n+U^{n+1}}{2}.$$
The Crank-Nicolson approximations $U^n \in S^r_h$ to $u(t^n,\cdot)$ are given by $\forall n=0,\cdots,N-1,$
\begin{equation}
\left\{
    \begin{aligned}
 &  (\partial U^n, \chi ) + (U^{n+1/2} U_x^{n+1/2},\chi ) + (U_x^{n+1/2}, \chi' ) - (\J[U^{n+1/2}],\chi') = 0, \quad \forall \chi \in \mathcal{S}^r_h   \\
      & U^0 := u^0_h
    \end{aligned}
\right.
\label{schemacrank}
\end{equation}

In this section, we prove  the existence of the Crank-Nicolson approximations $U^1,\cdots,U^N$, derive the error estimate and show uniqueness of the Crank-Nicolson approximations. We also give a stability result for this scheme. 

The proof of the existence of the Crank-Nicolson approximations \eqref{schemacrank} is based on the following variant of the Brouwer fixed-point theorem: 

\begin{lemma}[ Browder, \cite{Br65}] 
Let $(H,(\cdot,\cdot)_H)$ be a finite-dimensional inner product space and denote by $||\cdot||_H$ the induced norm. Suppose that $g:H\rightarrow H$ is continuous and there exists an $\alpha >0$ such that $(g(x),x)_H>0 $ for all $x\in H$ with $||x||_H = \alpha.$ Then there exists $x^* \in H$ such that $g(x^*)=0$ and $||x^*||\leq \alpha.$ \\
\label{brouwer}
\end{lemma}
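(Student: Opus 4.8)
The plan is to deduce this variant from the classical Brouwer fixed-point theorem by a short contradiction argument. First I would use the finite-dimensionality of $H$ to fix an orthonormal basis and identify $(H,(\cdot,\cdot)_H)$ isometrically with $\R^m$ endowed with its Euclidean inner product, so that the classical Brouwer theorem is available on the closed ball $\overline{B}_\alpha := \{x \in H : \|x\|_H \le \alpha\}$, which is compact and convex.

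Next I would argue by contradiction: suppose $g(x)\neq 0$ for every $x\in\overline{B}_\alpha$. Then the map
\begin{equation*}
f : \overline{B}_\alpha \longrightarrow \overline{B}_\alpha, \qquad f(x) := -\alpha\,\frac{g(x)}{\|g(x)\|_H},
\end{equation*}
is well defined and continuous, since $g$ is continuous and $\|g(x)\|_H>0$ on all of $\overline{B}_\alpha$; moreover $\|f(x)\|_H=\alpha$ for every $x$, so $f$ maps $\overline{B}_\alpha$ into itself (onto its boundary sphere, in fact). Brouwer's theorem then provides a fixed point $x^* = f(x^*)$.

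To close the argument I would extract a contradiction from this fixed point. Since $x^*=f(x^*)$ lies on the sphere, $\|x^*\|_H=\alpha$, so the hypothesis applies at $x^*$ and gives $(g(x^*),x^*)_H>0$. On the other hand, pairing the fixed-point identity with $x^*$ yields
\begin{equation*}
(g(x^*),x^*)_H = \Bigl(g(x^*),\,-\alpha\,\tfrac{g(x^*)}{\|g(x^*)\|_H}\Bigr)_H = -\alpha\,\|g(x^*)\|_H < 0,
\end{equation*}
which is the desired contradiction. Hence $g$ must vanish at some $x^*\in\overline{B}_\alpha$, and in particular $\|x^*\|_H\le\alpha$.

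I do not expect any genuine obstacle: this is a classical fact, and the only points needing a word of justification are that $\overline{B}_\alpha$ is compact and convex and that $f$ is continuous with values in $\overline{B}_\alpha$, both of which are immediate once the non-vanishing assumption is in force. (If one prefers to bypass the identification with $\R^m$, one may instead invoke Brouwer's theorem directly in the form valid for compact convex subsets of a finite-dimensional normed space.)
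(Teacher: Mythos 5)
Your proof is correct and is the standard argument for this ``acute angle'' variant of Brouwer's theorem: if $g$ were nonvanishing on $\overline{B}_\alpha$, the map $f(x)=-\alpha\,g(x)/\|g(x)\|_H$ would have a fixed point $x^*$ on the sphere $\|x\|_H=\alpha$, and pairing $x^*=f(x^*)$ with $g(x^*)$ gives $(g(x^*),x^*)_H=-\alpha\|g(x^*)\|_H<0$, contradicting the hypothesis. The paper offers no proof of this lemma --- it is quoted verbatim with a citation to Browder --- so there is nothing to compare against; your reduction to the classical Brouwer fixed-point theorem is exactly the expected argument and has no gaps.
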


\begin{proposition}[Existence]
For  $\Delta t>0$  sufficiently small, there exists a  solution $U^n \in \mathcal{S}^r_h$ satisfying \eqref{schemacrank}. 

\end{proposition}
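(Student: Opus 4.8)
The plan is to apply the Browder fixed-point lemma (Lemma \ref{brouwer}) on the finite-dimensional space $H = \mathcal{S}^r_h$ equipped with the $L^2$ inner product $(\cdot,\cdot)$. The idea is to freeze $U^n$ (which is known inductively, starting from $U^0 = u^0_h$) and to construct a continuous map $g : \mathcal{S}^r_h \to \mathcal{S}^r_h$ whose zero is precisely the midpoint value $U^{n+1/2}$. Concretely, for $w \in \mathcal{S}^r_h$ I would define $g(w) \in \mathcal{S}^r_h$ by
\begin{equation*}
(g(w),\chi) = \left(\frac{2(w-U^n)}{\Delta t},\chi\right) + (w w_x,\chi) + (w_x,\chi') - (\J[w],\chi') \qquad \forall \chi \in \mathcal{S}^r_h,
\end{equation*}
so that $g(w) = 0$ is exactly the first equation of \eqref{schemacrank} with $w = U^{n+1/2}$; one then recovers $U^{n+1} = 2w - U^n$. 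Since $\mathcal{S}^r_h$ is finite-dimensional and all the terms on the right-hand side depend continuously (indeed polynomially, together with the bounded linear operator $\J$) on $w$, the map $g$ is continuous.

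Next I would verify the sign condition: there exists $\alpha > 0$ such that $(g(w),w) > 0$ whenever $\|w\| = \alpha$. Testing with $\chi = w$ gives
\begin{equation*}
(g(w),w) = \frac{2}{\Delta t}\|w\|^2 - \frac{2}{\Delta t}(U^n,w) + (w w_x,w) + (w_x - \J[w],w_x).
\end{equation*}
The convective term $(w w_x,w) = \int \partial_x(w^3/3) = 0$ by periodicity and vanishes. For the remaining bilinear part I invoke Remark \ref{rem1} (with $\nu = 1$): there is $\alpha_0 > 0$ with $(w_x - \J[w],w_x) \geq -\alpha_0 \|w\|^2$. Hence
\begin{equation*}
(g(w),w) \geq \left(\frac{2}{\Delta t} - \alpha_0\right)\|w\|^2 - \frac{2}{\Delta t}\|U^n\|\,\|w\|.
\end{equation*}
For $\Delta t$ small enough that $\tfrac{2}{\Delta t} - \alpha_0 \geq \tfrac{1}{\Delta t} > 0$, the right-hand side is $\|w\|\left(\tfrac{1}{\Delta t}\|w\| - \tfrac{2}{\Delta t}\|U^n\|\right)$, which is strictly positive as soon as $\|w\| = \alpha := 2\|U^n\| + 1$. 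Browder's lemma then yields $w^* \in \mathcal{S}^r_h$ with $g(w^*) = 0$ and $\|w^*\| \leq \alpha$, and setting $U^{n+1} := 2w^* - U^n$ produces the desired solution of \eqref{schemacrank}. An induction on $n$ from $U^0 = u^0_h$ completes the argument.

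The step I expect to require the most care is the sign/coercivity estimate — specifically making sure that the anti-diffusive operator $\J$ does not destroy positivity. This is exactly where Remark \ref{rem1} is essential: the diffusion term $(w_x,w_x)$ only controls $\J[w]$ up to a lower-order $-\alpha_0\|w\|^2$ term, and it is the $\tfrac{2}{\Delta t}\|w\|^2$ coming from the time-stepping that must dominate this, which is precisely why the smallness condition on $\Delta t$ (independent of $h$ at this stage) appears in the statement. The convective term must also be checked to vanish rather than merely be bounded, since a cubic term cannot be absorbed by $\|w\|^2$ for large $\|w\|$; the periodic boundary conditions make $(w w_x, w) = 0$ exactly, which is the other point that needs to be handled cleanly.
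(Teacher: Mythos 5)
Your proof is correct and follows essentially the same route as the paper: the same Browder fixed-point argument on $\mathcal{S}^r_h$ applied to the midpoint unknown, with the sign condition obtained from the vanishing of the convective term by periodicity and from Remark \ref{rem1}, your map $g$ being the paper's map divided by $\Delta t$. If anything, your write-up is slightly cleaner on the constants (the paper's smallness condition and choice of the radius $\alpha$ contain what appear to be typographical slips), and you make explicit the cancellation $(w w_x, w)=0$ that the paper uses silently.
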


\begin{proof}
We prove the existence of $U^0,\cdots,U^N$ by induction. \\
Assume that $U^0,\cdots,U^n,$ for $n<N$ exist and let $g:\mathcal{S}^r_h \rightarrow \mathcal{S}^r_h$ be defined by 
\begin{equation*}
(g(V),\chi) = 2 (V-U^n,\chi) + \Delta t (V V',\chi) + \Delta t (V',\chi') - \Delta t (\J[V],\chi'), \quad \forall V,\chi \in \mathcal{S}^r_h. 
\end{equation*}
We can easily see that this mapping is continuous. Moreover, taking $\chi=V$ we have
\begin{equation*}
(g(V),V) = 2(V-U^n,V) + \Delta t ||V'||^2 - \Delta t (\J[V],V'),
\end{equation*}
and using Remark \ref{rem1} (which is still valable in $\mathcal{S}^r_h$), we obtain
\begin{equation*}
(g(V),V) \geq 2 ||V|| \left\{ (1-\frac{\alpha_0 \Delta t}{2})||V|| - ||U^n|| \right\}, \quad \forall V \in \mathcal{S}^r_h. 
\end{equation*}
Therefore, assuming $\Delta t < \frac{\alpha_0}{2}$ and for $V = \frac{2}{2-\alpha_0 \Delta t } U^n + 1$, we obtain $(g(V),V) >  0$. The existence of a $V^* \in \mathcal{S}_h$ such that $g(V^*)=0$ follows from Lemma \ref{brouwer}. Finally, $U^{n+1}:=2V^* - U^n $ satisfies \eqref{schemacrank}. 
\end{proof}

Uniqueness is less obvious, we need first to show an error estimate to get it. We will show it after the main theorem. \\

The time discretization being semi-implicit, we need a stability condition to ensure the validity of the computations. We then prove that the numerical process \eqref{schemacrank} is stable in the following sense: 
\begin{definition}[C-stability]
A numerical scheme is C-stable for the norm $||\cdot||$ if for all $T>0$, there exists a constant $K(T)>0$ independent of the time and space steps $\Delta t, h$ such that for all initial data $U^0$ 
\begin{equation}
||U^n|| \leq K(T) \, ||U^0||, \quad \forall 0 \leq n \leq \frac{T}{\Delta t}. 
\end{equation}
\end{definition}

\begin{proposition}[Stability \label{stabilitycondition}] Under the appropriate regularity assumptions, it exists two positive constants $C_1,C_2$ independent of $\Delta t, h$, and dependent of initial data, such that, if 
\begin{equation}
C_1 \frac{\Delta t}{ h^2} + C_2 \frac{\Delta t}{h^{4/3}} \leq 1,
\end{equation}
then the numerical scheme is C-stable.  \\
\end{proposition}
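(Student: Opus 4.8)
The plan is to test the Crank--Nicolson scheme \eqref{schemacrank} with $\chi = U^{n+1/2}$ and track the evolution of $\|U^n\|^2$. With this choice the nonlinear term $(U^{n+1/2}U_x^{n+1/2},U^{n+1/2})$ vanishes by periodicity (it is $\frac13\int (\,(U^{n+1/2})^3\,)_x = 0$), exactly as in the proof of Proposition \ref{prop1}. The term $(\partial U^n,U^{n+1/2})$ telescopes: it equals $\frac{1}{2\Delta t}(\|U^{n+1}\|^2 - \|U^n\|^2)$. What remains is the bad term $(U_x^{n+1/2},U_x^{n+1/2}) - (\J[U^{n+1/2}],U_x^{n+1/2})$, which by Remark \ref{rem1} (applied in $\mathcal S_h^r$, with $\nu = 1$) is bounded below by $-\alpha_0\|U^{n+1/2}\|^2$. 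This already gives $\|U^{n+1}\|^2 \le \|U^n\|^2 + \alpha_0\Delta t\|U^{n+1}+U^n\|^2/2$, but the right-hand side still involves $\|U^{n+1}\|$, so a naive discrete Gronwall does not close without a smallness hypothesis; this is where the CFL-type condition must enter, and so I would instead keep the diffusive and nonlocal terms on the left and estimate them more carefully rather than throwing them away wholesale.

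The key refinement is to bound the anti-diffusive contribution using an inverse inequality in the finite element space rather than Remark \ref{rem1} alone. From Lemma \ref{estimateJ1} we have $\|\J[U^{n+1/2}]\| \le C\|U^{n+1/2}\|_{2/3}$, and interpolating, $\|U^{n+1/2}\|_{2/3} \le C\|U^{n+1/2}\|^{1/3}\|U^{n+1/2}\|_1^{2/3}$; combined with the standard inverse estimate $\|\chi\|_1 \le C h^{-1}\|\chi\|$ valid on $\mathcal S_h^r$, one gets $\|\J[U^{n+1/2}]\| \le C h^{-2/3}\|U^{n+1/2}\|$, hence $|(\J[U^{n+1/2}],U_x^{n+1/2})| \le C h^{-2/3}\|U^{n+1/2}\|\,\|U_x^{n+1/2}\| \le C h^{-4/3}\|U^{n+1/2}\|^2$. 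Similarly $\|U_x^{n+1/2}\|^2 \le C h^{-2}\|U^{n+1/2}\|^2$. Feeding these into the energy identity, $\|U^{n+1}\|^2 - \|U^n\|^2 \le \bigl(C_1 \frac{\Delta t}{h^2} + C_2\frac{\Delta t}{h^{4/3}}\bigr)\|U^{n+1}+U^n\|^2/2 \le \bigl(C_1\frac{\Delta t}{h^2}+C_2\frac{\Delta t}{h^{4/3}}\bigr)(\|U^{n+1}\|^2+\|U^n\|^2)$; under the hypothesis $C_1\frac{\Delta t}{h^2} + C_2\frac{\Delta t}{h^{4/3}} \le 1$ this rearranges (after absorbing a factor into $C_1,C_2$ at the outset so the coefficient is, say, $\le \frac14$) to $\|U^{n+1}\|^2 \le \frac{1+\beta}{1-\beta}\|U^n\|^2$ with $\beta < 1$ fixed. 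Iterating $n$ times, $\|U^n\|^2 \le \bigl(\tfrac{1+\beta}{1-\beta}\bigr)^{n}\|U^0\|^2 \le e^{2\beta n/(1-\beta)}\|U^0\|^2 \le e^{C T/(1-\beta)}\|U^0\|^2$ since $n\Delta t \le T$ and $\beta \le C\Delta t$, which gives C-stability with $K(T) = e^{CT}$.

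The main obstacle is making the chain of inequalities for the nonlocal term quantitatively clean: one must justify the interpolation inequality $\|\cdot\|_{2/3} \le C\|\cdot\|^{1/3}\|\cdot\|_1^{2/3}$ (immediate from the Fourier-coefficient characterization of $H^s_{per}$ and Hölder on the sum, using $(1+n^2)^{2/3} = (1+n^2)^{1}{}^{\cdot 2/3}(1+n^2)^{0\cdot 1/3}$) and then track how the constants $C_1, C_2$ depend on the constant $C$ from Lemma \ref{estimateJ1}, on $L$, and on the inverse-inequality constant — but crucially \emph{not} on $\Delta t$ or $h$. A secondary point is the role of the ``appropriate regularity assumptions'': they are what guarantees the inverse inequality applies (i.e. that we stay in $\mathcal S_h^r$) and that the data $U^0 \in \mathcal S_h^r$; the argument above never differentiates $U$ in time beyond the discrete difference, so no regularity of the continuous solution is actually needed for stability, only for the earlier error estimates. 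I would close by noting that the same test-function computation, read together with the error equation, is what will also deliver uniqueness of the $U^n$.
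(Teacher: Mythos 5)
Your energy identity with $\chi=U^{n+1/2}$ is set up correctly (the transport term does vanish by periodicity, and $(\partial U^n,U^{n+1/2})$ telescopes exactly), but the step where you convert it into C-stability fails. After discarding the dissipation and bounding the gradient and nonlocal terms by inverse inequalities you arrive at
\[
\|U^{n+1}\|^2-\|U^n\|^2\le \beta\,\bigl(\|U^{n+1}\|^2+\|U^n\|^2\bigr),\qquad \beta:=C_1\tfrac{\Delta t}{h^2}+C_2\tfrac{\Delta t}{h^{4/3}},
\]
and you then iterate $\|U^{n+1}\|^2\le\tfrac{1+\beta}{1-\beta}\|U^n\|^2$ claiming $\beta\le C\Delta t$. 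That claim is false: the CFL hypothesis only gives $\beta\le 1$ (or $\le 1/4$ after rescaling the constants); $\beta$ contains the factors $h^{-2}$ and $h^{-4/3}$ and is not $O(\Delta t)$ uniformly in $h$. With $\beta$ of order one the per-step amplification $\tfrac{1+\beta}{1-\beta}$ is a constant strictly larger than $1$, and after $n=T/\Delta t$ steps you get a bound of the form $(\mathrm{const})^{T/\Delta t}$, which blows up as $\Delta t\to 0$ rather than giving $e^{CT}$. This is exactly the structural point on which the paper's proof differs: there one tests with $\chi=U^{n+1}$, so that $(U^{n+1}-U^n,U^{n+1})$ produces the extra positive term $\tfrac12\|U^{n+1}-U^n\|^2$, and the inverse-inequality losses appear multiplying the \emph{difference} $\|U^{n+1}-U^n\|^2$. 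The CFL condition is then needed only to make the coefficient $1-C_1\Delta t/h^2-C_2\Delta t/h^{4/3}$ of that difference nonnegative so it can be dropped, while the coefficients of $\|U^{n+1}\|^2$ and $\|U^n\|^2$ stay of the form $1+O(\Delta t)$, which is what the Gronwall iteration requires. Your version places the CFL factor on $\|U^{n+1}\|^2+\|U^n\|^2$ itself, which destroys that step.

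Ironically, the first route you sketched and then abandoned does close. From Remark \ref{rem1} with $\nu=1$ (valid in $\mathcal S_h^r$) you get $\tfrac{1}{2\Delta t}(\|U^{n+1}\|^2-\|U^n\|^2)\le\alpha_0\|U^{n+1/2}\|^2\le\tfrac{\alpha_0}{2}(\|U^{n+1}\|^2+\|U^n\|^2)$, hence $(1-\alpha_0\Delta t)\|U^{n+1}\|^2\le(1+\alpha_0\Delta t)\|U^n\|^2$; for $\Delta t\le 1/(2\alpha_0)$ the ratio is $\le 1+4\alpha_0\Delta t\le e^{4\alpha_0\Delta t}$ and iteration gives $\|U^n\|^2\le e^{4\alpha_0T}\|U^0\|^2$. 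The smallness requirement here is on $\Delta t$ alone, independent of $h$, so this is a legitimate stability proof (in fact stronger than the proposition, since no mesh-ratio condition is needed), and it also bypasses the a priori $L^\infty$ bounds on $U^n$ that the paper's treatment of the nonlinear term invokes. To prove the proposition as stated, either follow the paper's $\chi=U^{n+1}$ route and absorb the difference terms, or simply carry your first computation through to its conclusion.
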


\begin{proof}
Taking $\chi = U^{n+1} $ in \eqref{schemacrank}, we obtain 
\begin{eqnarray}
(\frac{U^{n+1}-U^n}{\Delta t}, U^{n+1}) + (U^{n+1/2} U_x^{n+1/2},U^{n+1} ) + (U^{n+1/2}_x,U^{n+1}_x ) - (\J[U^{n+1/2}], U_x^{n+1} )  &=& 0 \nonumber \\
\label{eqpdn}
\end{eqnarray}
But
\begin{equation}
(U^{n+1}-U^n, U^{n+1}) = \frac{1}{2} ||U^{n+1}||^2 - \frac{1}{2}||U^n||^2 + \frac{1}{2}||U^{n+1}-U^n ||^2,
\label{est}
\end{equation}
and 
\begin{eqnarray*}
-(U^{n+1/2}_x,U^{n+1}_x ) + (\J[U^{n+1/2}], U_x^{n+1} ) & = & \frac{1}{2} (U_x^{n+1}-U_x^n,U_x^{n+1} ) - \frac{1}{2}(\J[U^{n+1}]-\J[U^n],U_x^{n+1} ) \\ 
&\, &  - (U_x^{n+1},U_x^{n+1}) + (\J[U^{n+1}],U_x^{n+1} ) \\
 & \leq &\frac{1}{4}||U_x^{n+1}-U_x^n||^2 + \frac{1}{4} ||\J[U^{n+1}]-\J[U^n]||^2  \\
&\,&  - (\frac{1}{4} U_x^{n+1} - \J[U^{n+1}], U^{n+1}_x ) - \frac{1}{4} ||U_x^{n+1}||^2 \\
&\leq &  \frac{1}{4}||U_x^{n+1}-U_x^n||^2 + \frac{1}{4} ||\J[U^{n+1}]-\J[U^n]||^2 \\
&\,&  + \alpha_0 ||U^{n+1}||^2 - \frac{1}{4} ||U_x^{n+1}||^2 ,
\end{eqnarray*}
where $\alpha_0 >0. $ From Lemma \ref{estimateJ1} and from inverse inequatity, we have 
\begin{equation}
||(u_h)_x||^2 \leq \frac{C}{h^2} ||u_h||^2, \quad ||\J[u_h]||^2 \leq \frac{C}{h^{4/3}} ||u_h||^2 \hspace{0.4cm} \forall u_h \in \mathcal{S}^r_h,
\label{relationx}
\end{equation}
then
\begin{eqnarray*}
-(U^{n+1/2}_x,U^{n+1}_x ) + (\J[U^{n+1/2}], U_x^{n+1} )  &\leq&  \frac{C_1}{h^2} ||U^{n+1}-U^n||^2 + \frac{C_2}{h^{4/3}} ||U^{n+1}-U^n||^2 \\ 
&+& \alpha_0 ||U^{n+1}||^2  - \frac{1}{4} ||U_x^{n+1}||^2
\label{estl}
\end{eqnarray*}
Let study now the nonlinear term. 
\begin{eqnarray*}
- 4 (U^{n+1/2} U^{n+1/2}_x, U^{n+1} ) &=& ((U^{n+1}-U^{n})(U_x^{n+1}-U_x^n),U^{n+1} ) - 2 (U^nU_x^n, U^{n+1} ) \\
&=&  ((U^{n+1}-U^{n})(U_x^{n+1}-U_x^n),U^{n+1} ) + (U^nU_x^{n+1}, U^{n} ) \\
&=&  ((U^{n+1}-U^{n})(U_x^{n+1}-U_x^n),U^{n+1} ) + (U^n U_x^{n+1},U^n-U^{n+1} ) \\ 
& & + ( (U^n-U^{n+1})U^{n+1}_x,U^{n+1} ), 
\end{eqnarray*}
by the boundedness of $U^{n+1}$ and $U^n$, we obtain
\begin{eqnarray}
\label{estnl}
-(U^{n+1/2} U^{n+1/2}_x, U^{n+1} ) &\leq& C ||U^{n+1}-U^{n}||  \, ||U_x^{n+1}-U_x^{n}|| \nonumber \\ 
&\, & + \tilde{C} ||U_x^{n+1}|| \, ||U^n-U^{n+1}||. 
\end{eqnarray}
Therefore, using \eqref{eqpdn}, \eqref{est}, \eqref{relationx} and \eqref{estl}, we get
\begin{eqnarray*}
(1-2\alpha_0 \Delta t)||U^{n+1}||^2 - ||U^n||^2 + (1 - \frac{C_1 \Delta t}{h^2} - \frac{C_2 \Delta t}{h^{4/3} } ) ||U^{n+1}-U^n ||^2 \leq C_3 \Delta t  ||U^{n+1}-U^n ||^2. 
\end{eqnarray*} 
Under the condition 
\begin{equation*}
1 - \frac{C_1 \Delta t}{h^2} - \frac{C_2 \Delta t}{h^{4/3} }  \geq 0, 
\end{equation*}
namely 
\begin{equation*}
\frac{C_1 \Delta t}{h^2} + \frac{C_2 \Delta t}{h^{4/3} } \leq 1,
\end{equation*}
we have 
\begin{equation*}
||U^{n+1}||^2 \leq (1+ C \Delta t) ||U^n||^2 \leq e^{C T} ||U^0||^2,
\end{equation*}
which shows that the numerical scheme is C-stable. \\
\end{proof}

\noindent The main result of this papier is given in the following theorem:

\begin{theorem}[Error estimate \label{maintheorem}] Let the solution $u$ of \eqref{fowlereqn2} be sufficiently smooth, $U^0,\cdots,U^N$ satisfy 
\eqref{schemacrank} and \eqref{CI} hold.  Then, for $\Delta t$ sufficiently small, we have 
\begin{equation}
\max_{0\leq n \leq N} ||u^n - U^n|| \leq C( \Delta t^2 + h^{r-1}), 
\end{equation}
where $C=C(u)$ is a positive constant. 
\end{theorem}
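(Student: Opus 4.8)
The plan is to adapt the classical Crank--Nicolson finite element argument (cf. \cite{Thomee2006}): the nonlocal anti-diffusive operator is absorbed into the bilinear form $a$ of Lemma~\ref{biliforme} and handled through the elliptic projection $\mathcal{P}$ of Lemma~\ref{lemproj}, while the Burgers-type nonlinearity is treated by an induction on $n$ combined with an inverse inequality. First I would split $u^n - U^n = \rho^n + \theta^n$, with $\rho^n := u^n - \mathcal{P}u^n$ and $\theta^n := \mathcal{P}u^n - U^n \in \mathcal{S}^r_h$. Lemma~\ref{lemproj} gives $\|\rho^n\| \le Ch^r\|u^n\|_r$ and $\|\rho^n_x\| \le Ch^{r-1}\|u^n\|_r$; since $\mathcal{P}$ is linear and time independent, $\rho^{n+1}-\rho^n = \int_{t^n}^{t^{n+1}}(u_t - \mathcal{P}u_t)\,dt$, hence $\|\partial\rho^n\| \le \max_{[0,T]}\|u_t - \mathcal{P}u_t\| \le Ch^r$. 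By the triangle inequality it then suffices to prove $\max_n\|\theta^n\| \le C(\Delta t^2 + h^{r-1})$.

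Next I would derive the error equation: evaluate \eqref{formva} at $t^{n+1/2}$, subtract \eqref{schemacrank}, rewrite the linear part through $a$ (adding and subtracting the $\lambda$-terms), use the orthogonality $a(\rho^{n+1/2},\chi)=0$ built into \eqref{operproj}, and decompose $u(t^{n+1/2}) - U^{n+1/2} = \sigma^n + \rho^{n+1/2} + \theta^{n+1/2}$, where $\sigma^n := u(t^{n+1/2}) - \frac12(u^n+u^{n+1})$. This gives, for all $\chi\in\mathcal{S}^r_h$,
\[
(\partial\theta^n,\chi) + a(\theta^{n+1/2},\chi) = -(\omega^n,\chi) - (\partial\rho^n,\chi) - a(\sigma^n,\chi) + \lambda(\sigma^n+\rho^{n+1/2}+\theta^{n+1/2},\chi) - (\mathcal{N}^n,\chi),
\]
with $\omega^n := u_t(t^{n+1/2}) - \partial u^n$ and $\mathcal{N}^n := u(t^{n+1/2})u_x(t^{n+1/2}) - U^{n+1/2}U^{n+1/2}_x$. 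Taylor expansion at $t^{n+1/2}$ together with the assumed smoothness of $u$ yields $\|\omega^n\| \le C\Delta t^2$ and $\|\sigma^n\|_1 \le C\Delta t^2$. Taking $\chi=\theta^{n+1/2}$, the time term telescopes exactly, $(\partial\theta^n,\theta^{n+1/2}) = \frac{1}{2\Delta t}(\|\theta^{n+1}\|^2 - \|\theta^n\|^2)$, and splitting off the diffusion gives $a(\theta^{n+1/2},\theta^{n+1/2}) - \lambda\|\theta^{n+1/2}\|^2 = \frac12\|\theta^{n+1/2}_x\|^2 + (\frac12\theta^{n+1/2}_x - \J[\theta^{n+1/2}],\theta^{n+1/2}_x) \ge \frac12\|\theta^{n+1/2}_x\|^2 - \alpha_0\|\theta^{n+1/2}\|^2$ by Remark~\ref{rem1}. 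The terms $(\omega^n,\theta^{n+1/2})$, $(\partial\rho^n,\theta^{n+1/2})$, $\lambda(\sigma^n+\rho^{n+1/2},\theta^{n+1/2})$ are each $\le C(\Delta t^2+h^{r-1})^2 + C\|\theta^{n+1/2}\|^2$, while $|a(\sigma^n,\theta^{n+1/2})| \le C\|\sigma^n\|_1\|\theta^{n+1/2}\|_1$ is absorbed into a small multiple of $\|\theta^{n+1/2}_x\|^2$ plus $C\Delta t^4 + C\|\theta^{n+1/2}\|^2$.

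For the nonlinear term I would write $\mathcal{N}^n = w(w-W)_x + (w-W)W_x$ with $w := u(t^{n+1/2})$, $W := U^{n+1/2}$, substitute $w - W = \sigma^n + \rho^{n+1/2} + \theta^{n+1/2}$, test against $\theta^{n+1/2}$, and integrate by parts (using periodicity) to move off every derivative falling on $W$ or on $\theta^{n+1/2}$; this reduces everything to terms controlled by $\|w\|_{1,\infty}\|\theta^{n+1/2}\|^2$, by $\|W\|_\infty\|\theta^{n+1/2}\|\,\|\theta^{n+1/2}_x\|$, and by $\|W\|_\infty(\|\sigma^n\|_1+\|\rho^{n+1/2}\|_1)(\|\theta^{n+1/2}\|+\|\theta^{n+1/2}_x\|)$. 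Using the smoothness of $u$ and the $L^\infty$-boundedness of $U^n$, $U^{n+1}$, these are bounded by $C(\Delta t^2 + h^{r-1})^2 + C\|\theta^{n+1/2}\|^2$ plus a small multiple of $\|\theta^{n+1/2}_x\|^2$. Collecting all estimates, absorbing the gradient contributions into $\frac12\|\theta^{n+1/2}_x\|^2$, using $\|\theta^{n+1/2}\|^2 \le \frac12(\|\theta^{n+1}\|^2+\|\theta^n\|^2)$ and multiplying by $2\Delta t$ gives, for $\Delta t$ small,
\[
\|\theta^{n+1}\|^2 \le (1+C\Delta t)\|\theta^n\|^2 + C\Delta t(\Delta t^2 + h^{r-1})^2 .
\]
Since $\|\theta^0\| \le \|\mathcal{P}u^0 - u^0\| + \|u^0 - u^0_h\| \le Ch^r + Ch^{r-1} \le Ch^{r-1}$ by Lemma~\ref{lemproj} and \eqref{CI}, the discrete Gronwall lemma gives $\max_n\|\theta^n\| \le C(\Delta t^2+h^{r-1})$, and the triangle inequality with $\|\rho^n\|\le Ch^r$ finishes the proof.

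The main obstacle is the nonlinear term: closing the recursion requires an a priori $L^\infty$ bound on the fully discrete solution $U^n$, which does not follow directly from the $L^2$-stability of Proposition~\ref{stabilitycondition}. It has to be obtained by a bootstrap, i.e. an induction on $n$ in which the bound $\|\theta^j\| \le C(\Delta t^2+h^{r-1})$ for $j\le n$ is used, together with an inverse inequality $\|\chi\|_\infty \le Ch^{-1/2}\|\chi\|$ and a mild mesh restriction of CFL type relating $\Delta t$ and $h$, to control $\|U^{n+1/2}\|_\infty$. Once this is granted, all the remaining ingredients — the $\mathcal{O}(\Delta t^2)$ Crank--Nicolson truncation errors, the projection estimates of Lemma~\ref{lemproj}, and the control of the anti-diffusive part via Remark~\ref{rem1} and the coercivity of $a$ — are routine.
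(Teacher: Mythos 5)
Your proposal follows essentially the same route as the paper: the same splitting $u^n-U^n=\rho^n+\theta^n$ via the elliptic projection of Lemma~\ref{lemproj}, the same decomposition of the consistency error into an $\mathcal{O}(h^r)$ projection part, an $\mathcal{O}(\Delta t^2)$ Crank--Nicolson truncation part and a nonlinear part, the same test function $\theta^{n+1/2}$ with Remark~\ref{rem1} controlling the anti-diffusive term, and a discrete Gronwall argument to close. If anything, you are more careful than the paper on the one delicate point --- the a priori $L^\infty$ bound on $U^{n+1/2}$ needed for the nonlinear term, which the paper simply invokes as ``boundedness of $U^{n+1}$ and $U^n$'' while you correctly identify that it requires a bootstrap induction combined with an inverse inequality.
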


\begin{proof}
Let $W^n := \P u(t^n,\cdot)$, $\rho^n := u^n-W^n$ and $\v^n :=W^n-U^n$. Then
$$u^n-U^n = \rho^n + \v^n.$$
Using Lemma \ref{lemproj}, we have
$$\max_{0\leq n\leq N} ||\rho^n|| \leq C h^{r}.$$
Let us now estimate $||\v^n||.$
\begin{eqnarray*}
(\partial \v^n,\chi) + a(\v^{n+1/2},\chi) &=&  (\partial W^n, \chi) + a(W^{n+1/2},\chi ) - (\partial U^n,\chi) - a(U^{n+1/2},\chi) \\
\end{eqnarray*}
and since
$  a(W^{n+1/2},\chi ) = a(u^{n+1/2}, \chi)$ and   $$  (\partial U^n,\chi) + a(U^{n+1/2},\chi) = - (U^{n+1/2} U^{n+1/2}_x,\chi )+ \lambda (U^{n+1/2},\chi) $$ then
\begin{eqnarray}
\label{FVeq}
&\,& (\partial \v^n,\chi) + a(\v^{n+1/2},\chi) =  (\partial W^n, \chi) + a(u^{n+1/2}, \chi) + (U^{n+1/2} U^{n+1/2}_x,\chi ) - \lambda (U^{n+1/2},\chi)  \nonumber \\
& =& (\partial W^n, \chi)  - (u_t^{n+1/2},\chi) - (u^{n+1/2} u^{n+1/2}_x,\chi )  + \lambda (u^{n+1/2},\chi) + (U^{n+1/2} U^{n+1/2}_x,\chi ) - \lambda (U^{n+1/2},\chi)  \nonumber \\
&=& (w_1 + w_2 + w_3, \chi) + \lambda (\rho^{n+1/2},\chi) + \lambda (\v^{n+1/2},\chi) \nonumber\\
\end{eqnarray}
with 
$w_1 := \partial W^n - \partial u^n$, $w_2 :=\partial u^n - u_t^{n+1/2}$ and $w_3 := U^{n+1/2} U^{n+1/2}_x-u^{n+1/2} u^{n+1/2}_x.$
We have that $||w_1|| \leq C h^{r}.$ \\
\noindent Let us study $w_2.$ We have 
\begin{eqnarray*}
\Delta t \,  w_2 &=& u^{n+1} - u^n -\Delta t u_t^{n+1/2}\\
&=& \frac{1}{2} \int_{t^n}^{t^{n+1/2}} (s-t_n)^2 u_{3t}(s) ds  + \frac{1}{2} \int_{t^{n+1/2}}^{t^{n+1}}  (s-t_{n+1})^2 u_{3t}(s) ds \\
&\leq& C \Delta t^2  \int_{t^n}^{t^{n+1}} ||u_{3t}(s)|| ds. 
\end{eqnarray*}

\noindent Let us study $w_3$: Since
\begin{eqnarray*}
w_3 &=&  U^{n+1/2} U^{n+1/2}_x-u^{n+1/2} u^{n+1/2}_x \\
&=& U^{n+1/2}(U_x^{n+1/2} - u_x^{n+1/2} ) + u_x^{n+1/2}(U^{n+1/2}-u^{n+1/2}) \\
\end{eqnarray*}
then 
\begin{equation*}
||w_3|| \leq  ||U^{n+1/2}||_{\infty} ||U_x^{n+1/2} - u_x^{n+1/2}|| + ||u_x^{n+1/2}||_{\infty} ||U^{n+1/2}-u^{n+1/2}||
\end{equation*}
But,  $ U_x^{n+1/2} - u_x^{n+1/2} = \rho_x^{n+1/2} + \v_x^{n+1/2}$. \\

Now, taking $\chi = \v^{n+1/2}$ in \eqref{FVeq}, we get
\begin{eqnarray*}
&\,& (\partial \v^n,\v^{n+1/2}) + (\v_x^{n+1/2}, \v_x^{n+1/2}) - (\J[\v^{n+1/2}],\v^{n+1/2}) + \lambda (\v^{n+1/2},\v^{n+1/2}) = \\ 
&\,& (w_1,\v^{n+1/2})  +  (w_2,\v^{n+1/2}) + (w_3,\v^{n+1/2}) +  \lambda (\rho^{n+1/2},\v^{n+1/2}) + \lambda (\v^{n+1/2},\v^{n+1/2})
\end{eqnarray*}
and since 
$$(\partial \v^n,\v^{n+1/2}) = \frac{1}{2 \Delta t} ||\v^{n+1}||^2 - \frac{1}{2 \Delta t} ||\v^{n}||^2,$$
then we have
\begin{eqnarray*} 
&\,& ||\v^{n+1}||^2 -||\v^{n}||^2 + 2 \Delta t \left(  ||\v_x^{n+1/2}||^2 - (\J[\v^{n+1/2}], \v_x^{n+1/2}) +  \lambda ||\v^{n+1/2}||^2  \right)   \leq  2 \Delta t ||w_1|| ||\v^{n+1/2}|| \\
&\, & +  2 \Delta t \left(   ||w_2|| \,  ||\v^{n+1/2}|| + ||w_3|| \,  ||\v^{n+1/2}||   \right)   + 2\Delta t  \, \lambda ||\rho^{n+1/2}|| \, ||\v^{n+1/2}|| + 2 \Delta t \lambda ||\v^{n+1/2}||^2.  \\
\end{eqnarray*}
Using Lemma \ref{lemproj} and Remark \ref{rem1} we have
\begin{eqnarray*}
||\v^{n+1}||^2-||\v^{n}||^2  &\leq&  \Delta t C(u)(h^{2(r-1)} + \Delta t^4  +  ||\v^{n+1/2}||^2 )
\end{eqnarray*}
Since $4 ||\v^{n+1/2}||^2 = ||\v^{n+1}||^2 + ||\v^n||^{2} + 2(\v^{n+1},\v^n)$ then 
for $\Delta t$ sufficiently small and using the discrete Gronwall lemma, we get
$$\max_{0 \leq n \leq N} ||\v^n|| \leq c(u) (\Delta t^2 + h^{r-1}),$$
which concludes the proof. 
\end{proof}

\begin{remark}[Uniqueness] We return to the question of uniqueness of the solution of \eqref{schemacrank}. We show that this holds for  $\Delta t, h$  sufficently small  when the solution of the continuous problem is smooth and when \eqref{CI} holds. \\
Let $U^n$ and $V^n$ be two solutions of \eqref{schemacrank} with $U^{n-1}$ given. Letting $E^n := U^n-V^n$ , we obtain by subtraction 
\begin{eqnarray*}
(\partial E^n,\chi) + (E_x^{n+1/2},X') - (\J[E^{n+1/2}],X') =  (E^{n+1/2} E_x^{n+1/2},\chi) + (U^{n+1/2} E^{n+1/2},\chi') \hspace{0.1cm}    \, \forall \chi \in \mathcal{S}^r_h.
\end{eqnarray*}
Taking $\chi = E^{n+1/2}$ we obtain by periodicity 
\begin{eqnarray*}
\frac{1}{2 \Delta t} (||E^{n+1}||^2-||E^n||^2) &+&   ||E_x^{n+1/2} ||^2 - ( \J [E^{n+1/2}], E^{n+1/2} ) = \\
 &=& (U^{n+1/2} E_x^{n+1/2},E_x^{n+1/2}) \\
&\leq& \frac{1}{2} ||U^{n+1/2}||_{\infty}^2 ||E^{n+1/2}||^2 + \frac{1}{2} ||E_x^{n+1/2} ||^2 \\
&\leq& \frac{1}{2} (||W^{n+1/2}||^2_{\infty}  + || \mathcal{V}^{n+1/2}||_{\infty}^2) ||E^{n+1/2}||^2  + \frac{1}{2} ||E_x^{n+1/2} ||^2.
\end{eqnarray*}
Using Remark \ref{rem1}, Theorem \ref{maintheorem} and since the following inverse inequality holds 
\begin{equation}
||\chi||_{\infty} \leq C h^{-1/2} ||\chi||, \quad \forall \chi \in \mathcal{S}^r_h, 
\end{equation}
we obtain 
\begin{eqnarray*}
\frac{1}{2 \Delta t} (||E^{n+1}||^2-||E^n||^2) &\leq&  C  ( 1 + \Delta t^4 + h^{2(r-2)}) ||E^{n+1/2} ||^2 \\
&\leq&  C  ( 1 + h^{-1/2}\Delta t^4 + h^{-1/2} h^{2(r-1)} ) (||E^{n} ||^2+  ||E^{n+1}||^2)
\end{eqnarray*} 
Therefore if we assume  $E^n=0$, we get  for  $\Delta t^5 h^{-1/2}$ and $\Delta t \, h^{2r-3/2} $ sufficiently small  $E^{n+1}=0.$ We deduce uniqueness of the Crank-Nicolson approximations. 

\end{remark}

\section{Numerical experiments \label{senum}}

We conclude this paper by presenting some experimental results obtained using numerical scheme \eqref{schemacrank} with Crank-Nicolson method for the time disretization and the Garlerkin method for different polynomial orders. In our numerical experiments we have imposed a zero Dirichlet boundary condition on the whole exterior domain $\left\{ |x| >1 \right\}$ and we have confined the nonlocal operator $\J$ to the domain $\Omega=\left\{ |x| \leq 1 \right\}$. This means we have computed the value of $U^{n+1}$ by using only the values $U^{n}(x_i)$ with $x_i \in \Omega$. \\ For all the numerical tests, the stability condition stated in Proposition \ref{stabilitycondition} is satisfied. \\
 In order to magnify the effect of the nonlocal term, we add a  small viscous coefficient $\varepsilon$ in the Fowler equation 
\begin{equation}
\partial_t u(t,x) +  \partial_x \left(\frac{u^2}{2} + \J[u] \right) (t,x) - \varepsilon \partial_{xx} u = 0,\\
\end{equation}
We consider the following two initial data:

\noindent Example 1: 
\begin{equation*}
u_0(x) = \left\{
\begin{array}{rl}
  0 &\mbox{ if } x\leq -0.6  \\
  4x + 2.4 & \mbox{ if } -0.6<x\leq -0.4 \\
  0.8 &\mbox{ if } -0.4 <x\leq 0. \\ 
  0.8-4x &\mbox{ if} 0. < x \leq 0.2 \\ 
  0. &\mbox{ if } x >0.2 \\
\end{array}
\right.
\end{equation*}

\noindent  Example 2: 
$$u_0(x) =  e^{-50(x+0.2)^2}.$$

The numerical results are presented in Figures \ref{fig1} and \ref{fig2}.  For the first example, we used linear elements for the Galerkin method ($r=2$) while we used a second order polynomial approximations $(r=3)$ for the second example.    In all plots, the solid line represents the initial datum while the dotted line the numerical solution at $t=T.$ As we expect from the viscous Burgers equation the initial data are propagated downstream but we observe here in addition an "erosive process" behind the bump due to the nonlocal term. \\

\begin{figure}[!t]
\centering
\includegraphics[scale=0.35]{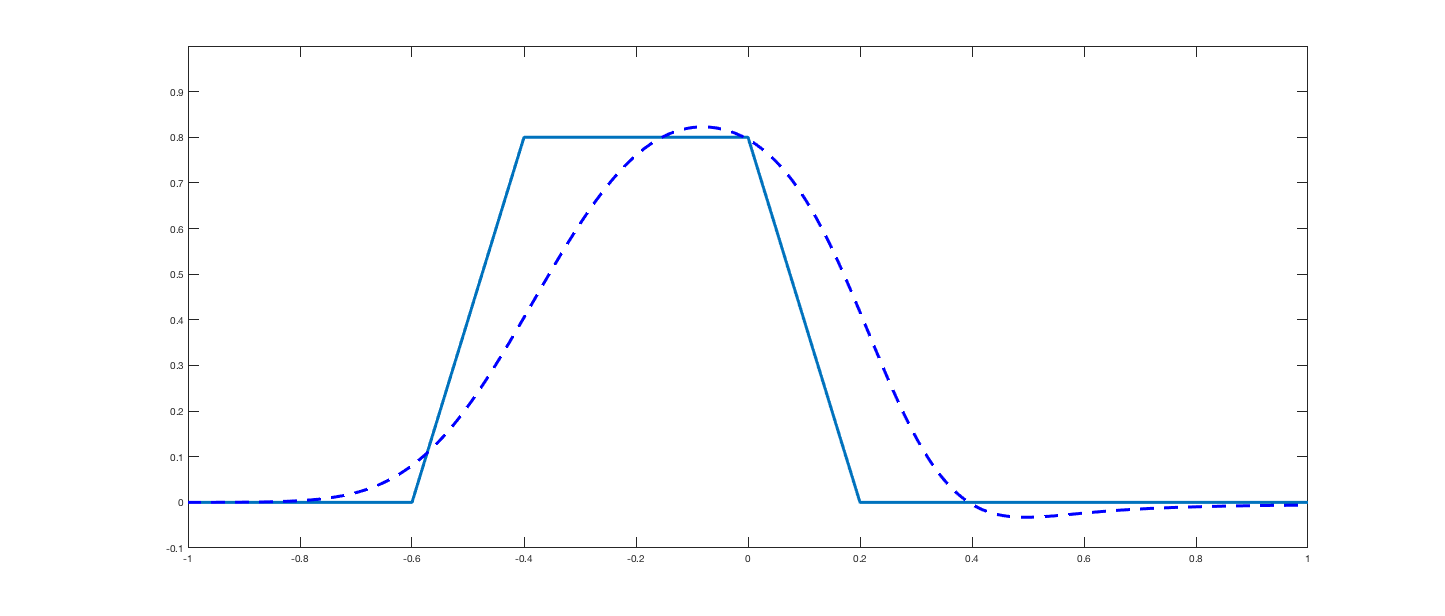}
\caption{Example 1: $r=2$, $T=0.1$ and $N=640$}
\label{fig1}
\end{figure}

\begin{figure}[!t]
\includegraphics[scale=0.35]{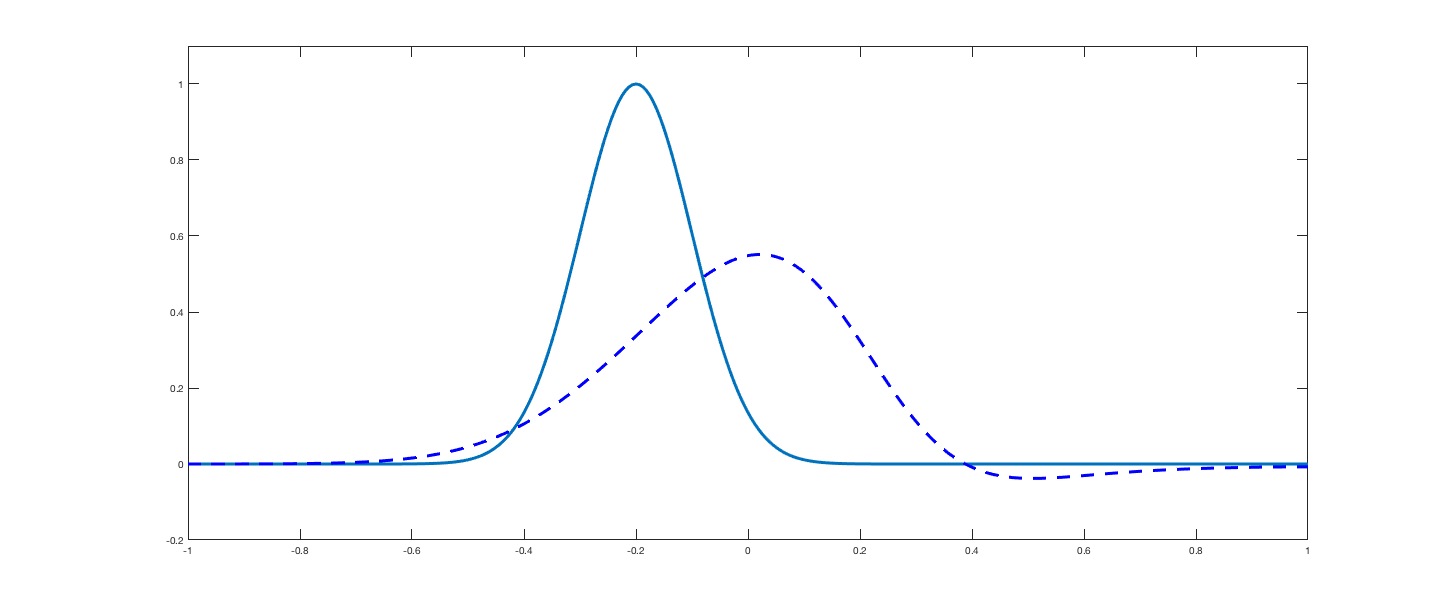}
\caption{Example 2: $r=3$, $T=0.2$ and $N=640$}
\label{fig2}
\end{figure}

\noindent The numerical rate of convergence for the solutions in Figures \ref{fig1} and \ref{fig2} are presented in Tables \ref{table1} and \ref{table2}. 

\begin{table}[h!]
\begin{tabular}{|c|c|c|c|}
  
  $N$ & error  & relative error & order \\
  \hline
  20 & 4.0759e-04 & 4.3296e-04 & 1.9532 \\
  40 & 1.0526e-04 & 1.1181e-04 & 1.9173 \\
  80 & 2.7867e-05 & 2.9601e-05 & 1.7207 \\
  160 & 8.4546e-06 & 8.9808e-06 & - \\
\end{tabular}
\vspace{0.4cm}
\caption{Example 1: Error, relative error and numerical rate of convergence  for one order polynomial approximations ($r=2$). $N$ denotes the number of elements.  }
\label{table1}
\end{table}

\begin{table}
\begin{tabular}{|c|c|c|c|}
  
  $N$ & error  & relative error & order \\
  \hline
  20 & 1.0381e-04 & 3.1458e-04 & 2.3097 \\
  40 & 2.0939e-05 & 6.3452e-05 & 2.0792 \\
  80 & 4.9551e-06 & 1.5015e-05 & 1.8057 \\
  160 & 1.4174e-06 & 4.2952e-06 & - \\
\end{tabular}
\vspace{0.4cm}
\caption{Example 2: Error, relative error and numerical rate of convergence for second order polynomial approximations ($r=3$). $N$ denotes the number of elements.  }
\label{table2}
\end{table}
\noindent We have measured the $L^2$-error 
\begin{equation*}
E_h = ||u_h(T,\cdot)-\hat{u}_e(T,\cdot)||^2,
\end{equation*}
where $\hat{u}_e$ is the numerical solution which has been computed using a very fine grid $h=2/640$. 
We also have measured the relative error 
\begin{equation*}
R_h = \left( \frac{1}{||\hat{u}_e(T,\cdot)||^2}  \right) E_h, 
\end{equation*}
and the approximation rate of convergence
\begin{equation*}
\alpha_h = \left( \frac{1}{\log 2}  \right)  \left( \log E_h - \log E_{h/2} \right). 
\end{equation*}
We observe that the order of convergence is reached, confirming the theoretical results. Indeed, the experimental rates of convergence are greather than one for the first numerical example ($r=2$) and for the second example ($r=3$),  the numerical rates of convergence  are near to 2.

\bibliographystyle{siam}
\bibliography{biblio}

\begin{thebibliography}{10}

\bibitem{Ak94}
{\sc G.~D. Akrivis}, {\em {Finite element discretization of the
  Kuramoto-Sivashinsky equation}}, Numerical analysis and mathematical
  modelling, 29 (1994), pp.~155 -- 163.

\bibitem{AAI10}
{\sc N.~Alibaud, P.~Azerad, and D.~Is{\`e}be}, {\em A non-monotone nonlocal
  conservation law for dune morphodynamics}, Differential Integral Equations,
  23 (2010), pp.~155--188.

\bibitem{ABRC14}
{\sc A.~Bouharguane and R.~Carles}, {\em Splitting methods for the nonlocal
  fowler equation}, Mathematics of Computation, 83 (2014).

\bibitem{Br65}
{\sc F.~E. Browder}, {\em Existence and uniqueness theorems for solutions of
  nonlinear boundary value problems}, Proc. {S}ympos. {A}ppl. {M}ath., {V}ol.
  {XVII},  (1965), pp.~24--49.

\bibitem{JD10}
{\sc J.~Droniou}, {\em {A numerical method for fractal conservation laws}},
  Mathematics of Computation, 79 (2010), pp.~95--124.

\bibitem{Fix04}
{\sc G.~J. Fix and J.~P. Roof}, {\em Least squares finite-element solution of a
  fractional order two-point boundary value problem}, Computers $\&$
  Mathematics with Applications, 48 (2004), pp.~1017--1033.

\bibitem{Fo01}
{\sc A.~C. Fowler}, {\em Dunes and drumlins}, in Geomorphological fluid
  mechanics, A.~Provenzale and N.~Balmforth, eds., vol.~211, Springer-Verlag,
  Berlin, 2001, pp.~430--454.

\bibitem{GG15}
{\sc Q.~Guan and M.~Gunzburger}, {\em $\theta$-schemes for finite element
  discretization of the space-time fractional diffusion equations}, Journal of
  Computational and Applied Mathematics, 288 (2015), pp.~264 -- 273.

\bibitem{Liu04}
{\sc F.~Liu, V.~Anh, and I.~Turner}, {\em Numerical solution of the space
  fractional fokker–planck equation}, Journal of Computational and Applied
  Mathematics, 166 (2004), pp.~209 -- 219.
\newblock Proceedings of the International Conference on Boundary and Interior
  Layers - Computational and Asymptotic Methods.

\bibitem{Meerschaert04}
{\sc M.~M. Meerschaert and C.~Tadjeran}, {\em Finite difference approximations
  for fractional advection-dispersion flow equations}, Journal of Computational
  and Applied Mathematics, 172 (2004), pp.~65 -- 77.

\bibitem{Roop06}
{\sc J.~P. Roop}, {\em Computational aspects of fem approximation of fractional
  advection dispersion equations on bounded domains in $\mathbb{R}^2$}, Journal
  of Computational and Applied Mathematics, 193 (2006), pp.~243 -- 268.

\bibitem{Thomee2006}
{\sc V.~Thom{\'e}e}, {\em Galerkin finite element methods for parabolic
  problems}, Springer Series in Computational Mathematics,  (2006).

\bibitem{Xu13}
{\sc Q.~{Xu} and J.~S. {Hesthaven}}, {\em {Discontinuous Galerkin method for
  fractional convection-diffusion equations}}, SIAM Journal on Numerical
  Analysis, 52 (2014), pp.~405 -- 423.

\bibitem{Zheng10}
{\sc Y.~Zheng, C.~Li, and Z.~Zhao}, {\em A note on the finite element method
  for the space-fractional advection diffusion equation}, Computers {$\&$}
  Mathematics with Applications, 59 (2010), pp.~1718--1726.

\end{thebibliography}

\end{document}